\newcommand{\basin}{\B}
\newcommand{\const}{{\rm const}}
\newcommand{\cB}{{\mathcal B}}
\newcommand{\cF}{{\mathcal F}}
\newcommand{\cK}{{\mathcal K}}
\newcommand{\cP}{{\mathcal P}}
\newcommand{\cL}{{\mathcal L}}
\newcommand{\cS}{{\mathcal S}}
\newcommand{\cU}{{\mathcal U}}
\newcommand{\B}{{\mathbbm B}}
\newcommand{\I}{{\mathbbm I}}
\newcommand{\J}{{\mathbbm J}}
\newcommand{\R}{{\mathbbm R}}
\newcommand{\N}{{\mathbbm N}}
\renewcommand{\P}{{\mathbbm P}}
\newcommand{\inn}{\operatorname{int}}
\renewcommand{\S}{{S}}
\newcommand{\mac}{{\mu_{\rm ac}}}
\newcommand{\myDelta}{{H}}
\newcommand{\CH}{C^{1+\text{H\"o{}lder}}}
\newtheorem {definition}{Definition}[section] 
\newtheorem {lemma}[definition]{Lemma}
\newtheorem{theorem}{Theorem}
\newtheorem {bemerkung}[definition]{Remark}
\newtheorem{proposition}[definition]{Proposition}
\newtheorem{beispiel}[definition]{Example}
\newtheorem{hypothese}{Hypothesis}
\newenvironment{remark} {\begin{bemerkung} \normalfont }{\end{bemerkung}}
\newenvironment{example} {\begin{beispiel} \normalfont }{\end{beispiel}}
\newenvironment{hypothesis} {\begin{hypothese} \normalfont }{\end{hypothese}}
\begin{document}

\title{Stability index, uncertainty exponent, and thermodynamic formalism for intermingled basins of chaotic attractors}
\author{Gerhard Keller\thanks{This work was funded by  DFG grant Ke 514/8-1. It also profited from the activities of the DFG Scientific Network ``Skew Product Dynamics and Multifractal Analysis'' organized by Tobias Oertel-J\"ager.}}
\affil{Department Mathematik, Universit\"at Erlangen-N\"urnberg}
\date{\today}


\maketitle

\begin{abstract}
Skew product systems with monotone one-dimensional fibre maps driven by piecewise expanding Markov interval maps may show the phenomenon of intermingled basins \cite{Alexander1992,Bonifant2008,Kan1994,Sommerer1993}. To quantify the degree of intermingledness the uncertainty exponent \cite{Ott1993} and the stability index \cite{Podvigina-Ashwin2011,Roslan-thesis} were suggested and characterized (partially). Here we present an approach to evaluate/estimate these two quantities rigorously using thermodynamic formalism for the driving Markov map.
\end{abstract}

\section{Introduction}
Motivated by numerical observations by Alexander, Yorke, You and Kan \cite{Alexander1992} and Sommerer and Ott \cite{Sommerer1993}, the term \emph{intermingled basins} was introduced to the mathematics literature by Kan in \cite{Kan1994}: ``[\ldots] any open set which intersects one basin in a set of positive measure also intersects each of the other basins in a set of positive measure'', and he showed the existence of an open set of diffeomorphisms with two intermingled basins on a three-dimensional manifold. The degree of intermingledness was soon quantified by the \emph{uncertainty exponent} \cite{Ott1993} originally introduced in \cite{Grebogi1985}. 
In \cite{Ott1994a}  this exponent is calculated numerically for a certain model of point particle motion subject to friction and periodic forcing in a two-dimensional potential (which does not display intermingled but riddled basins),
and the authors support their findings theoretically by calculations carried out for a simple piecewise linear model, 
that has the form of a skew product over two-leg base map with two full linear branches and
for which the dynamics are essentially equivalent to that of a random walk with two regimes of transition probabilities.

Later, a related quantity denoted \emph{stability index} was introduced in \cite{Podvigina-Ashwin2011} and further studied in \cite{Roslan-thesis}. While the uncertainty exponent measures the degree of intermingledness averaged over points that are equidistant to one of the attractors, the stability index measures intermingledness close to individual points, so it resembles a local dimension.

In the present paper we provide rigorous derivations of the two exponents mentioned before for nonlinear generalizations of the piecewise linear system. We use thermodynamic formalism and large deviations theory for the base map in order to study a nonlinear version of the two-regime random walk. The class of systems we investigate includes examples from \cite{Bonifant2008} and also some of the maps from \cite{Hofbauer2004} that serve as models of \emph{indeterminate competition} in two species systems.
For a particular family of such systems the numerical analyses in the spirit of \cite{Ott1994a} was carried out in \cite{Pereira2008}.


In the remainder of this  section we introduce the class of skew product danamical systems studied in this paper and some basic concepts related to them. In Section 2 we recall the characterization of intermingledness through normal Lyapunov exponents, define the stability index and the uncertainty exponent and state our three main theorems, whose proofs are deferred to Section 3.
\subsection{The basic model}
We study skew product dynamical systems $F:\Omega\times\J\to\Omega\times\J$, $F(\omega,x)=(S\omega,f_\omega(x))$, 
defined as follows:
\begin{definition}
$\cF_s$ denotes the family of all skew product transformations $F$ on $\Omega\times\J$ where
\begin{compactenum}[-]
\item $(\Omega,\cB)$ is a standard Borel space and $\S:\Omega\to\Omega$ is a $\cB$-measurable transformation,
\item $\J$ is a compact interval that we choose as $\J=[-1,1]$ for convenience,
\item the maps $f_\omega:\J\to\J$ are diffeomorphisms
with $f_\omega(\pm1)=\pm1$ and negative Schwarzian derivative $\cS f_\omega<0$, where 
$\cS f:=\frac{f'''}{f'}-\frac{3}{2}\left(\frac{f''}{f'}\right)^2$,
\item
and finally we require that 
$(\omega,x)\mapsto f_\omega(x)$ is measurable.
\end{compactenum}
\end{definition}

For the iterates $F^n$ of $F$ we adopt the usual notation $F^n(\omega,x)=(\S^n\omega,f_\omega^n(x))$. 
Hence $f_\omega^{n+k}(x)=f_{\S^k\omega}^n(f_\omega^k(x))$.
For $n=1$ and $k=-1$ this includes the identity
$f_\omega^{-1}(x)=(f_{\S^{-1}\omega})^{-1}(x)$.

\subsection{Invariant graphs and their Lyapunov exponents}
Denote by $\cP(S)$ the space of all $S$-invariant probability measures on $(\Omega,\cB)$ and  by $\cP_e(\S)$ the family of all ergodic $\nu\in\cP(S)$.
As $(\Omega,\cB)$ is assumed to be standard Borel, non-ergodic invariant probability measures can be decomposed into ergodic ones.

\begin{definition}
Let $F\in\cF_s$. A measurable function $\varphi:\Omega\to \J $ is an \emph{invariant graph}, if
\begin{equation*}
F(\omega,\varphi(\omega))=(\S\omega,\varphi(\S\omega)),\;\text{equivalently }
f_\omega(\varphi(\omega))=\varphi(\S\omega),\quad\text{for all }\omega\in\Omega\ .
\end{equation*}
If $\nu\in\cP(S)$ and the identity holds for $\nu$-almost every $\omega$ we call $\varphi$ a \emph{$\nu$-a.e. invariant graph}.
(The slight abuse of terminology underlines that we are mostly interested in properties of the graph of $\varphi$ as a subset of $\Omega\times\J$.)
\end{definition}
Observe that each $F\in\cF_s$ has the two constant invariant graphs $\varphi^\pm(\omega):=\pm1$.

\begin{definition}
\begin{enumerate}[a)]
\item
Let $F\in\cF_s$ and $(\omega,x)\in\Omega\times \J $. If the limit
\begin{equation*}
\lambda(\omega,x):=\lim_{n\to\infty}\frac{1}{n}\log|Df^n_\omega(x)|
\end{equation*}
exists,it is called the normal Lyapunov exponent of $F$ in $(\omega,x)$. 
\item 
Let $\nu\in\cP_e(\S)$.
If $\varphi$ is a $\nu$-a.e. invariant graph with $\log|Df_\omega(\varphi(\omega))|\in \cL^1_\nu$, its Lyapunov exponent w.r.t. $\nu$ is defined as
\begin{equation*}
\lambda_\nu(\varphi):=\int_\Omega\log|Df_\omega(\varphi(\omega))|\,d\nu(\omega)\ .
\end{equation*}
\end{enumerate} 
\end{definition}

The following proposition is a consequence of negative Schwarzian derivative.
\begin{proposition}[\cite{Jager2003,Bonifant2008}]
\label{prop:JBM}
Let $F\in\cF_s$ and $\nu\in\cP_e(S)$. Then 
$\lambda_\nu(\varphi^-)+\lambda_\nu(\varphi^+)<0$.
\end{proposition}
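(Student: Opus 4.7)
The plan is to reduce the proposition to a fibrewise estimate that depends only on the hypothesis $\cS f_\omega<0$. Because $\varphi^\pm(\omega)=\pm 1$ and $f_\omega(\pm 1)=\pm 1$ for every $\omega$, the defining integrals read
\[
\lambda_\nu(\varphi^\pm)=\int_\Omega\log|f_\omega'(\pm 1)|\,d\nu(\omega),
\]
so it suffices to prove the pointwise inequality
\[
\log f'(-1)+\log f'(1)<0
\]
for every orientation-preserving $C^3$ diffeomorphism $f\colon[-1,1]\to[-1,1]$ with $f(\pm 1)=\pm 1$ and $\cS f<0$. Once that is in hand, integration of a strictly negative measurable function against a probability measure yields a value in $[-\infty,0)$, and the ergodicity of $\nu$ plays no role.

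For the pointwise bound, write $\alpha:=f'(-1)$, $\beta:=f'(1)$ and introduce the auxiliary function $h(x):=f'(x)^{-1/2}$, which is positive and $C^2$ on $[-1,1]$. Differentiating twice and matching terms with the definition of the Schwarzian yields
\[
h''(x)=-\tfrac12\,f'(x)^{-1/2}\,\cS f(x),
\]
so the hypothesis $\cS f<0$ is precisely the \emph{strict convexity} of $h$ on $[-1,1]$. Let $\ell$ be the affine interpolant of the boundary values $h(-1)=\alpha^{-1/2}$ and $h(1)=\beta^{-1/2}$; strict convexity gives $h(x)<\ell(x)$, and hence $h(x)^{-2}>\ell(x)^{-2}$, on $(-1,1)$.

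The topological constraint that $f$ is a self-diffeomorphism of $[-1,1]$ now enters through
\[
\int_{-1}^{1}h(x)^{-2}\,dx=\int_{-1}^{1}f'(x)\,dx=f(1)-f(-1)=2,
\]
while an elementary integration of $\ell^{-2}$ (via an affine substitution reducing to $\int_A^B u^{-2}\,du$) gives
\[
\int_{-1}^{1}\ell(x)^{-2}\,dx=\frac{2}{h(-1)\,h(1)}=2\sqrt{\alpha\beta}.
\]
The strict pointwise inequality $h^{-2}>\ell^{-2}$ integrates to $2>2\sqrt{\alpha\beta}$, i.e.\ $\alpha\beta<1$, which is exactly the claimed fibrewise bound $\log f'(-1)+\log f'(1)<0$.

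The only substantive step is the translation of $\cS f<0$ into strict convexity of $(f')^{-1/2}$; this is the classical identity underlying most applications of negative Schwarzian derivative in one-dimensional dynamics, and once it is available the rest is a convexity comparison against the linear interpolant followed by a one-line integration. The passage from the fibrewise estimate to the statement of the proposition is then automatic: a pointwise strictly negative measurable function has integral in $[-\infty,0)$ against any probability measure.
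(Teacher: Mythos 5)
Your proof is correct. The paper does not prove Proposition~\ref{prop:JBM} itself but only cites \cite{Jager2003,Bonifant2008}; the argument you give — translating $\cS f<0$ into strict convexity of $h=(f')^{-1/2}$ via the identity $h''=-\tfrac12(f')^{-1/2}\cS f$, comparing $h$ with its chord, and integrating $h^{-2}=f'$ over $[-1,1]$ to obtain $f'(-1)f'(1)<1$ — is precisely the classical route those references take, and the final integration against $\nu$ is immediate. You also correctly observe that ergodicity is not used; only that $\nu$ is a probability measure and the Lyapunov exponents are defined.
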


\begin{definition}
The basin of $\varphi^\pm$ is 
\begin{equation*}
\basin^\pm:=\left\{(\omega,x)\in\Omega\times\J:\,|f_\omega^n(x)-\varphi^\pm|\to0\ (n\to\infty)\right\}.
\end{equation*}
Denote $\varphi_c^-(\omega):=\sup\{x\in\J:\,(\omega,x)\in\basin^-\}$
and $\varphi_c^+(\omega):=\inf\{x\in\J:\,(\omega,x)\in\basin^+\}$, and observe that $\varphi_c^-$ and $\varphi_c^+$ are invariant graphs.
\end{definition}

Part a) of the following proposition can be found in \cite{Bonifant2008}. In a slightly different setting, both parts were proved in the earlier
paper \cite{Jager2003}. The reader can easily adapt the proof of part b) to the present setting.

\begin{proposition}\label{prop:two-basins}
Let $F\in\cF_s$, $\nu\in\cP_e(\S)$. Then exactly one of the following three possibilities occurs:
\begin{enumerate}[a)]
\item
If
$\lambda_\nu(\varphi^-)<0$ and $\lambda_\nu(\varphi^+)<0$,
then $\varphi^-<\varphi_c^-=\varphi_c^+<\varphi^+$ $\nu$-a.e. so that $(\nu\times m)((\Omega\times J)\setminus(\basin^-\cup\basin^+))=0$, and $(\nu\times m)(\basin^\pm)>0$. 
\item
If
$\lambda_\nu(\varphi^-)<0\leqslant\lambda_\nu(\varphi^+)$,
then $\varphi^-<\varphi_c^-=\varphi_c^+=\varphi^+$ $\nu$-a.e. so that $(\nu\times m)((\Omega\times J)\setminus\basin^-)=0$, and $(\nu\times m)(\basin^+)=0$. 
\item
If
$\lambda_\nu(\varphi^+)<0\leqslant\lambda_\nu(\varphi^-)$,
then $\varphi^-=\varphi_c^-=\varphi_c^+<\varphi^+$ $\nu$-a.e. so that $(\nu\times m)((\Omega\times J)\setminus\basin^+)=0$, and $(\nu\times m)(\basin^-)=0$. 
\end{enumerate}
In particular, there are at most three $\nu$-equivalence classes of invariant graphs, namely $\varphi^-$, $\varphi^+$ and $\varphi_c$.
\end{proposition}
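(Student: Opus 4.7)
\emph{Stage~1: fibre structure.} Since each $f_\omega$ is a diffeomorphism of $\J=[-1,1]$ fixing $\pm1$, it is orientation-preserving, so every fibre slice of $\basin^\pm$ is an interval containing $\pm1$. With the convention $\varphi_c^\pm(\omega)=\pm1$ where the corresponding slice is empty, $\varphi_c^\pm$ are measurable invariant graphs with $\varphi^-\leq\varphi_c^-\leq\varphi_c^+\leq\varphi^+$, and the three cases of the statement are mutually exclusive by Proposition~\ref{prop:JBM}.

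\emph{Stage~2: sign of exponent vs.\ basin mass.} Following \cite{Jager2003,Bonifant2008}, a Birkhoff-plus-Taylor argument along each constant graph yields the following dichotomy: if $\lambda_\nu(\varphi^-)<0$ there is a $\nu$-measurable $\varepsilon_\omega>0$ so that $[-1,-1+\varepsilon_\omega)$ lies in the fibre slice of $\basin^-$, giving $\varphi_c^->-1$ $\nu$-a.e.\ and $(\nu\times m)(\basin^-)>0$; while if $\lambda_\nu(\varphi^+)\geq 0$ the same Taylor expansion together with a sub-exponential oscillation estimate (the borderline $\lambda_\nu(\varphi^+)=0$ is the delicate one) rules out $f_\omega^n(x)\to 1$ for any $x<1$, so the fibre slice of $\basin^+$ reduces to $\{1\}$, hence $(\nu\times m)(\basin^+)=0$ and $\varphi_c^+=\varphi^+$ $\nu$-a.e. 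Symmetric statements hold at the other endpoint. The same dichotomy applies to any invariant graph (not only $\varphi^\pm$), which will be used in Stage~3.

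\emph{Stage~3: collapse of the middle strip.} It remains to show $\varphi_c^-=\varphi_c^+$ $\nu$-a.e. Suppose instead that $\varphi_c^-<\varphi_c^+$ on a set of positive $\nu$-measure; monotonicity of $f_\omega$ makes this set $\S$-invariant, so by ergodicity $\varphi_c^-<\varphi_c^+$ $\nu$-a.s. Rescale each fibre by the affine map $h_\omega:[-1,1]\to[\varphi_c^-(\omega),\varphi_c^+(\omega)]$: the restriction of $F$ to $\{(\omega,x):\varphi_c^-(\omega)\leq x\leq\varphi_c^+(\omega)\}$, conjugated by $h$, is again in $\cF_s$ (affine conjugation preserves negative Schwarzian and fixes the endpoints). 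Applying Proposition~\ref{prop:JBM} to the rescaled system, and noting that the Jacobian corrections from $h_\omega,h_{\S\omega}$ telescope to zero under the $\S$-invariant ergodic $\nu$, yields
\begin{equation*}
\lambda_\nu(\varphi_c^-)+\lambda_\nu(\varphi_c^+)<0,
\end{equation*}
so without loss of generality $\lambda_\nu(\varphi_c^-)<0$. The dichotomy of Stage~2, applied at the invariant graph $\varphi_c^-$ in place of $\varphi^-$, supplies a random fibre neighborhood of $\varphi_c^-$ whose orbits converge to $\varphi_c^-$. Any $x$ in this neighborhood with $x<\varphi_c^-(\omega)$ lies in $\basin^-$ by the very definition of $\varphi_c^-$, so simultaneously $f_\omega^n(x)\to-1$ and $f_\omega^n(x)-\varphi_c^-(\S^n\omega)\to 0$, forcing $\varphi_c^-(\S^n\omega)\to-1$ for $\nu$-typical $\omega$; by ergodicity of $\nu$ this gives $\varphi_c^-=-1$ $\nu$-a.e., contradicting $\varphi_c^->-1$ (which holds in cases a) and b); case c) is handled symmetrically using $\varphi_c^+$ and $\varphi^+$). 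The ``at most three equivalence classes'' assertion now follows: any invariant graph $\psi$ not $\nu$-a.e.\ equal to $\varphi^\pm$ must satisfy $\varphi_c^-\leq\psi\leq\varphi_c^+$ $\nu$-a.e.\ (else the $\S$-invariant set $\{\psi<\varphi_c^-\}$ or $\{\psi>\varphi_c^+\}$ would have full $\nu$-measure, dragging $\psi$ into $\basin^\pm$ and forcing $\psi=\varphi^\pm$ by invariance), and the collapse makes $\psi=\varphi_c$ $\nu$-a.e.

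The hardest point I foresee is the Jacobian-telescoping step in Stage~3: if $\log(\varphi_c^+-\varphi_c^-)$ fails to be $\nu$-integrable, the naive cancellation must be replaced by a sub-linear drift estimate for $n^{-1}\log h'_{\S^n\omega}$. The other technical points -- the borderline $\lambda_\nu(\varphi^\pm)=0$ case of the Stage~2 dichotomy and measurability of $\varphi_c^\pm$ -- are standard, and I would appeal to the detailed proofs in \cite{Jager2003,Bonifant2008}.
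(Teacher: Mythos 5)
The paper does not give its own proof of this proposition --- it refers the reader to \cite{Bonifant2008} for part~a) and to \cite{Jager2003} for a closely related version of all three parts --- so there is no in-text argument to compare against word for word. Your three-stage outline is consistent with what those references do: monotonicity plus Schwarzian to describe the basin fibres, a Birkhoff/Taylor (or cross-ratio) dichotomy relating the sign of $\lambda_\nu$ along an invariant graph to one-sided attraction, and a collapse argument for the middle strip. The logic of the contradiction in Stage~3 (in each of the three cases, one of $\varphi_c^\pm$ would be forced to an endpoint, contradicting the Stage~2 information) is correct, and the final "three classes" argument using $\S$-invariance of $\{\psi<\varphi_c^-\}$ and $\{\psi>\varphi_c^+\}$ is fine.

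On Stage~3 I have two remarks. First, the integrability worry you flag about $g:=\log\bigl((\varphi_c^+-\varphi_c^-)/2\bigr)$ is milder than you suggest. Since $g\leqslant 0$, the drift term $n^{-1}g(\S^n\omega)\leqslant 0$ vanishes automatically on the relevant side, so one gets $\limsup_n n^{-1}\log(f_\omega^n)'(\varphi_c^-(\omega))\leqslant\lambda_\nu(\tilde\varphi^-)$ for $\nu$-a.e.\ $\omega$ with no integrability of $g$, and this one-sided pointwise bound is all your Taylor attraction argument at $\varphi_c^-$ actually uses; no "sub-linear drift estimate" beyond the trivial one is needed. What does require justification, and is not addressed, is that Proposition~\ref{prop:JBM} may be applied to the rescaled system at all: one needs $\log\tilde f_\omega'(\pm1)\in\cL^1_\nu$. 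Under the paper's standing Hypothesis~\ref{hypo:3} this is immediate (then $\log f_\omega'$ is uniformly bounded and $\tilde f_\omega'(-1)=f_\omega'(\varphi_c^-(\omega))\bigl/\tfrac{1}{\varphi_c^+-\varphi_c^-}\int_{\varphi_c^-}^{\varphi_c^+}f_\omega'\bigr.$ is uniformly bounded away from $0$ and $\infty$), but in the general $\cF_s$ setting it is an extra assumption that should be stated, since the proposition is not formulated under Hypothesis~\ref{hypo:3}. A variant that sidesteps the conjugacy altogether --- and I believe is closer to what \cite{Jager2003,Bonifant2008} do --- is to note that the proof of Proposition~\ref{prop:JBM} is pointwise in $\omega$: negative Schwarzian and endpoint-fixing give a two-point inequality for $f_\omega$ restricted to the invariant strip $[\varphi_c^-(\omega),\varphi_c^+(\omega)]$ directly, and the rescaling only repackages the cross-ratio factor. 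That formulation avoids introducing the auxiliary skew product and makes the bookkeeping of the cocycle term transparent. Apart from this integrability point, I do not see a gap.
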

The fact that there are at most three $\nu$-equivalence classes of invariant graphs is, of course, due to the assumption of negative Schwarzian derivative.

\section{Stability index and uncertainty exponent for intermingled basins}
In order to keep the notation as simple as possible, we specialize already from here on to one-dimensional mixing Markov maps at the base:
\begin{hypothesis}\label{hypo:1}
$\Omega=\I$ is a finite interval, and $S:\I\to\I$
is a piecewise expanding and piecewise $\CH$ mixing Markov map 
with finitely many branches. It is a well known fact, often called the folklore theorem \cite{Bowen79}, that $S$ has a unique invariant probability measure $\mac$ equivalent to Lebesgue measure $m$ on $\I$.\footnote{Indeed, early sources treat only the piecewise $C^2$ case, but the proof carries over to piecewise $\CH$ maps.}
\end{hypothesis}

\subsection{Intermingled basins}

An early reference for intermingled basins (with quadratic fibre maps) is the paper \cite{Kan1994}.

\begin{definition}\label{def:intermingled}
Let $F\in\cF_s$. We say that the two basins 
$\basin^+$ and $\basin^-$ are \emph{intermingled}, if the intersections of both basins with any open set has positive Lebesgue-measure.
\end{definition}

\begin{proposition}[\cite{Bonifant2008}]\label{prop:intermingled}
If $\lambda_\mac(\varphi^-)<0$ and $\lambda_\mac(\varphi^+)<0$ and
if there are $\nu^-,\nu^+\in\cP_e(S)$ such that $\lambda_{\nu^-}(\varphi^-)>0$ and $\lambda_{\nu^+}(\varphi^+)>0$, then $\basin^-$ and $\basin^+$ are intermingled.
\end{proposition}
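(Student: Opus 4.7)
The plan is to apply Proposition~\ref{prop:two-basins} three times, with $\nu=\mac$, $\nu=\nu^+$ and $\nu=\nu^-$, and then to combine the structural information obtained with the mixing and Markov structure of the base map $S$. With $\nu=\mac$, both Lyapunov exponents are negative by hypothesis, so case~a) yields $\varphi^-<\varphi_c<\varphi^+$ $\mac$-a.e.\ and $(\mac\times m)(\basin^\pm)>0$. With $\nu=\nu^+$, Proposition~\ref{prop:JBM} forces $\lambda_{\nu^+}(\varphi^-)<-\lambda_{\nu^+}(\varphi^+)<0$, so case~b) applies and $\varphi_c=\varphi^+\equiv 1$ holds $\nu^+$-almost everywhere; symmetrically, $\varphi_c\equiv -1$ holds $\nu^-$-almost everywhere. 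In particular, for $\nu^+$-typical $\omega$ the fibre basin $\basin^+_\omega$ reduces essentially to $\{1\}$, and analogously for $\nu^-$-typical $\omega$.

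Since each $f_\omega$ is an orientation-preserving diffeomorphism fixing $\pm 1$, the fibre basins $\basin^\pm_\omega$ are intervals abutting $\pm 1$, meeting only at $\varphi_c(\omega)$. By Fubini, intermingledness reduces to showing that for every open $A\subset\I$ and every $c,d\in(-1,1)$ with $c<d$,
\[
m\{\omega\in A:\varphi_c(\omega)>c\}>0
\quad\text{and}\quad
m\{\omega\in A:\varphi_c(\omega)<d\}>0.
\]
I focus on the first inequality, the second being symmetric with $\nu^-$ in place of $\nu^+$. The strategy is to transport the $\nu^+$-typical property $\varphi_c=1$ into $A$ along preimages of $S$ and then upgrade it to a Lebesgue-positive statement using the expanding character of the fibre maps near $\varphi^+$. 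Concretely, approximate $\nu^+$ by a periodic orbit $\omega_0$ of period $p$ with $(f_{\omega_0}^p)'(1)$ arbitrarily close to $\exp(p\,\lambda_{\nu^+}(\varphi^+))>1$; such orbits exist by density of periodic measures for the mixing piecewise expanding Markov map $S$. The negative Schwarzian derivative then forces $\basin^+_{\omega_0}$ to be as small as one wishes provided $p$ is chosen large enough, so $\varphi_c(\omega_0)$ is arbitrarily close to $1$. The piecewise $\CH$ regularity of $S$, joint measurability of $(\omega,x)\mapsto f_\omega(x)$, and bounded distortion then extend this to a Markov cylinder $C_0\ni\omega_0$ of positive $m$-measure on which $\varphi_c>1-\varepsilon$. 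Mixing of $(S,\mac)$ combined with the Markov property of $S$ finally places a preimage $S^{-n}C_0$ inside $A$ in a set of positive $m$-measure, and the same distortion estimates propagate the lower bound on $\varphi_c$ backwards along the $n$ iterates to produce $\varphi_c>c$ on a positive-$m$-measure subset of $A$.

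The principal obstacle is the step that upgrades the $\nu^+$-almost-everywhere identity $\varphi_c=1$ to a positive-Lebesgue-measure statement. Because $\nu^+$ and $m$ can be mutually singular, the identity alone carries no Lebesgue content, and one must find an honest open Markov cylinder on which the fibre cocycle $Df^n_\omega(1)$ expands uniformly in the sense that the $\nu^+$-Lyapunov exponent is essentially realized. This is where both the negative Schwarzian hypothesis on the fibre maps (providing Koebe-type interval distortion control) and the piecewise $\CH$ regularity of $S$ (providing bounded distortion for the base cocycle) enter essentially: together they allow the passage from pointwise expansion along a single periodic orbit to uniform expansion on an open Markov cylinder, which is precisely what yields the Lebesgue-thickness of $\{\varphi_c>1-\varepsilon\}$ needed to close the argument.
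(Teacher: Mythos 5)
The paper itself gives no proof of Proposition~\ref{prop:intermingled}; it is cited directly from \cite{Bonifant2008}. So the comparison is purely a correctness check of your argument.

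Your reduction and the first two applications of Proposition~\ref{prop:two-basins} are fine, the reduction to showing $m\{\omega\in A:\varphi_c(\omega)>c\}>0$ and $m\{\omega\in A:\varphi_c(\omega)<d\}>0$ for every open $A$ and every $c,d\in(-1,1)$ is correct, and the high-level plan (periodic approximation of $\nu^+$, upgrade to a Lebesgue-positive set, propagate by mixing) is a legitimate route. Two smaller inaccuracies: once $(f_{\omega_0}^p)'(1)>1$, Proposition~\ref{prop:JBM} applied to the periodic equidistribution forces $(f_{\omega_0}^p)'(-1)<1$, and negative Schwarzian of $f_{\omega_0}^p$ then rules out interior fixed points, so $\varphi_c(\omega_0)=1$ \emph{exactly}, not merely approximately; and your conclusion should read ``a cylinder $C_0$ in which the set $\{\varphi_c>1-\varepsilon\}$ has positive $m$-measure'', not ``a cylinder on which $\varphi_c>1-\varepsilon$'', since $\varphi_c<1$ holds $\mac$-a.e.\ on $C_0$ with no a priori uniform lower bound.

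The genuine gap is exactly at the step you yourself flag as the principal obstacle, and the justification you offer for it does not close it. Uniform finite-time expansion of the fibre cocycle at $\varphi^+$ on a Markov cylinder $C_N$ of level $Np$ around $\omega_0$ controls only the first $Np$ fibre iterates, whereas $\varphi_c$ is an infinite-time object; by itself this says nothing about $\varphi_c|_{C_N}$. What is actually needed is an additional Lebesgue-positive input \emph{at the far end of the cylinder}, namely that $B_\vartheta:=\{\varphi_c\geqslant-1+\vartheta\}$ has positive $m$-measure for some $\vartheta>0$ (this follows from Proposition~\ref{prop:two-basins}(a) with $\nu=\mac$, which gives $\varphi_c>-1$ $\mac$-a.e.). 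One then uses the invariance identity $\varphi_c(\omega)=(f_\omega^{Np})^{-1}(\varphi_c(S^{Np}\omega))$, the bounded-distortion fact that $\{\omega\in C_N: S^{Np}\omega\in B_\vartheta\}$ has positive $m$-measure, and the one-sided Koebe inequality~\eqref{eq:os-Koebe} in the form
\begin{equation*}
\frac{1+\varphi_c(\omega)}{1-\varphi_c(\omega)}
\;\geqslant\;
(f_\omega^{Np})'(\varphi^+)\cdot\frac{1+\varphi_c(S^{Np}\omega)}{1-\varphi_c(S^{Np}\omega)}
\;\geqslant\;
(f_\omega^{Np})'(\varphi^+)\cdot\frac{\vartheta}{2-\vartheta}\,,
\end{equation*}
which sends $\varphi_c(\omega)\to1$ as $N\to\infty$ because $(f_\omega^{Np})'(\varphi^+)\geqslant e^{-D'}\big((f_{\omega_0}^p)'(1)\big)^N\to\infty$ uniformly on $C_N$ by~\eqref{eq:distortion1}. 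Without the $B_\vartheta$ input the Koebe bound is vacuous: if $\varphi_c(S^{Np}\omega)$ is extremely close to $-1$, the factor $\frac{1+\varphi_c(S^{Np}\omega)}{1-\varphi_c(S^{Np}\omega)}$ can overwhelm the large derivative, and indeed $(f_\omega^{Np})^{-1}$ fixes $-1$, so the pullback of points near $-1$ need not be near $1$. Supplying this one step makes your argument complete; as written, the claim that ``uniform expansion on an open Markov cylinder \dots\ is precisely what yields the Lebesgue-thickness of $\{\varphi_c>1-\varepsilon\}$'' is a non sequitur.
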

\noindent This is precisely the situation we are looking at, so we state this as a further hypothesis.
\begin{hypothesis}\label{hypo:2}
$\lambda_\mac(\varphi^-)<0$ and $\lambda_\mac(\varphi^+)<0$ , and
there are $\nu^-,\nu^+\in\cP_e(S)$ such that $\lambda_{\nu^-}(\varphi^-)>0$ and $\lambda_{\nu^+}(\varphi^+)>0$ (so that
Proposition~\ref{prop:intermingled} applies).
\end{hypothesis}

\begin{example}\label{ex:1}
Let $A,B:\Omega\to\R$, $A(\omega)=a+b\cos(2\pi\omega)$, $B(\omega)=c\sin(2\pi\omega+d)$ with $|b|<|a|$, and define
\begin{displaymath}
f_\omega(x):=\frac{\arctan(A(\omega)\cdot x+B(\omega))-D(\omega)}{C(\omega)}
\end{displaymath}
where $C(\omega)$ and $D(\omega)$ are determined such that
$f_\omega(\pm1)=\pm1$. Then $\cS f_\omega<0$.

For a numerical example  we use the doubling map as base transformation $\S$ that leaves the Lebesgue measure invariant, and we choose the following parameters:
$a\in[0.8,5],\; b=0.4,\;c=0.9,\;d=0.8$.
This choice guarantees
that $\lambda_m(\varphi^-)<0$ and
$\lambda_m(\varphi^+)=<0$.
When $a\in[0.8,1]$, then
the Lyapunov-exponent of the 4-periodic point $\omega_4=0.0666$ on the circle $x=-1$ and
that of the 5-periodic point $\omega_5=0.3548$ on the circle $x=1$
are strictly positive.
The equidistributions on these orbits thus play the role of the measures $\nu^-$ and $\nu^+$.

It seems that for $a\geqslant1.2$ the lower and upper invariant graph are uniformly attracting.
Hence there is $\delta>0$ such that $[0,1]\times[-1+\delta,1-\delta]$ is invariant under $F^{-1}$, and all $F^{-1}$-invariant measures are supported by $\varphi_c$ and have a negative vertical Lyapunov exponent under $F^{-1}$. Therefore Lemma 5.1 in \cite{Anagnostopoulou} suggests that $\varphi_c$ is continuous. (We say ``suggests'', because the baker map at the base is not a homeomorphism as required in that Lemma.)
\begin{figure}\label{fig:1}
\centering{
\includegraphics[height=10cm,width=15cm]{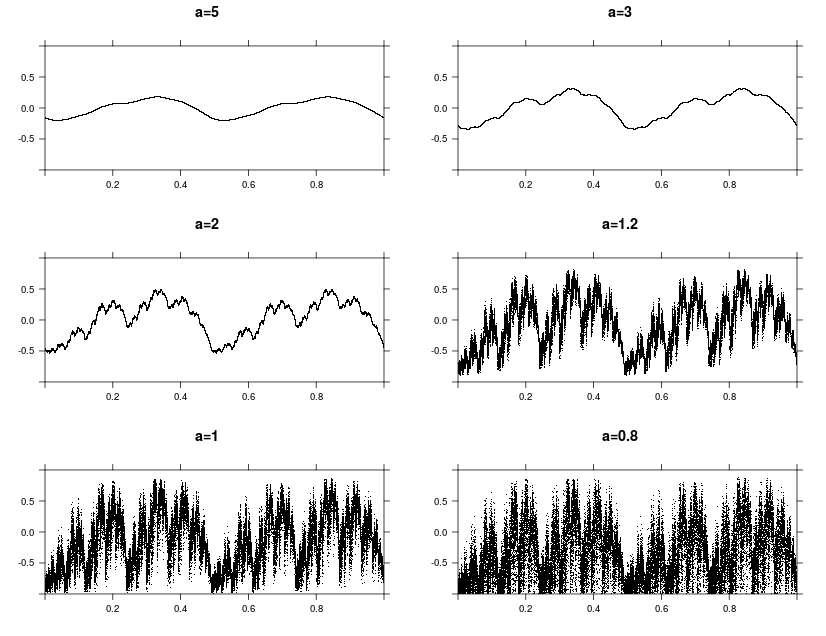}
}
\caption{The critical graphs $\varphi_c$ for various choices of the parameter $a$.}
\end{figure}
In those cases where $\varphi_c$
is a continuous curve bounded away from $\varphi^-$ and from $\varphi^+$, the box dimension of the graph of $\varphi_c$ is known: Bedford \cite{Bedford1999} studied systems that include our examples and showed that the box dimension is the unique zero of the pressure function 
\begin{equation*}
t\mapsto p\big(\omega\mapsto -(t-1)\log |S'(\omega)|+\log f_\omega'(\varphi_c(\omega)))\ ,
\end{equation*}
provided this zero is bigger than $1$.
\end{example}

\subsection{The stability index}

As in \cite{Keller2012a}, which was strongly motivated by  \cite{Podvigina-Ashwin2011}, we define a local stability index $\sigma(\omega,x)$ of a point $(\omega,x)\in\I\times\J$ w.r.t. $\basin^-$ and $\basin^+$ in the following way: Let
\begin{equation}
\Sigma^\pm_\epsilon(\omega,x):=
\frac{m^{2}\left(U_\epsilon(\omega,x)\cap\basin^\pm\right)}{m^{2}\left(U_\epsilon(\omega,x)\right)}\  ,
\end{equation}
where $U_\epsilon(\omega,x)=[\omega-\epsilon,\omega+\epsilon]\times[x-\epsilon,x+\epsilon]$.
Obviously, $\Sigma^{-}_\epsilon(\omega,x)+\Sigma^{+}_\epsilon(\omega,x)=1$.  Define
\begin{equation}
\sigma(\omega,x):=\sigma^+(\omega,x)-\sigma^-(\omega,x)
\end{equation}
where
\begin{equation}\label{eq:local-scaling}
\sigma^\pm(\omega,x):=\lim_{\epsilon\to0}\frac{\log\Sigma^\pm_\epsilon(\omega,x)}{\log\epsilon}
\end{equation}
Of course, the limits in (\ref{eq:local-scaling}) need not exist for every $(\omega,x)$, but
observe that $\liminf_{\epsilon\to0}\frac{\log\Sigma^\pm_\epsilon(\omega,x)}{\log\epsilon}$ $\geqslant0$ always.
Hence $\sigma^+(\omega,x)$ and $\sigma^-(\omega,x)$ are non-negative, and at most one of them can be strictly positive.
Observe also that
$\sigma^\pm(F(\omega,x))=\sigma^\pm(\omega,x)$ for all $(\omega,x)\in\I\times\J$.
(This is essentially Theorem 2.2 of \cite{Podvigina-Ashwin2011}.)
\begin{remark}
Denote by $m^\pm$ the {$2$-dimensional} Lebesgue measure on $\Omega\times\J$ restricted to $\basin^\pm$ Then $\sigma^\pm(\omega,x)+2$ is just the local dimension of the measure $m^\pm$ at $(\omega,x)$.
\end{remark}

In order to obtain precise quantitative information about $\sigma^\pm$, we require some regularity of $f_\omega(x)$:
\begin{hypothesis}\label{hypo:3}
The function $(\omega,x)\mapsto \log f'_\omega(x)$ is $\alpha$-H\"older continuous on each set $I_i\times\J$ where $I_i$ is a Markov interval of $\S$.
\end{hypothesis}

A first step towards evaluating $\sigma^\pm$ is the following result which is similar to
Theorem 2.1 in \cite{Keller2012a}. Its proof is provided in Section~\ref{subsec:proofTheo-exponential}.

\begin{theorem}
\label{theo:exponential}
Let $F\in\cF_s$ and assume Hypotheses~\ref{hypo:1} -~\ref{hypo:3}. There are $t^-_*,t^+_*>0$ (both defined by thermodynamic formalism, see below) such that
\begin{equation*}
\lim_{\epsilon\to0}\frac{\log m\{\varphi_c<\varphi^-+\epsilon)\}}{\log \epsilon}=t^-_*
\quad\text{and}\quad
\lim_{\epsilon\to0}\frac{\log m\{\varphi_c>\varphi^+-\epsilon\}}{\log \epsilon}=t^+_*\ .
\end{equation*}
\end{theorem}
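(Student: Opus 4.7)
My plan is to reduce everything to the fibrewise multiplicative cocycle near $\varphi^- = -1$ and then read off the exponent from the pressure function associated with it. Define $\phi^-(\omega):=\log f'_\omega(-1)$. By Hypothesis~\ref{hypo:3}, $\log f'_\omega(x)$ is $\alpha$-H\"older in $x$ on each Markov cylinder of $S$, so for $x+1$ small one has
\begin{equation*}
\log\bigl(f_\omega(x)+1\bigr)=\phi^-(\omega)+\log(x+1)+R(\omega,x),\qquad |R(\omega,x)|\leqslant C(x+1)^\alpha.
\end{equation*}
Iterating along the invariant-graph relation $f_\omega(\varphi_c(\omega))=\varphi_c(\S\omega)$ and using that the errors form a geometric sum (because once the orbit is trapped in $[-1,-1+\epsilon_0]$ each $(\varphi_c(\S^k\omega)+1)^\alpha$ is dominated by $e^{\alpha S_k\phi^-(\omega)}(\epsilon_0)^\alpha$), I obtain on the event that $\varphi_c(\S^j\omega)+1\leqslant\epsilon_0$ for $j=0,\dots,n$:
\begin{equation*}
\log\bigl(\varphi_c(\S^n\omega)+1\bigr)=S_n\phi^-(\omega)+\log\bigl(\varphi_c(\omega)+1\bigr)+O(1).
\end{equation*}

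Next I introduce the \emph{first escape time} $\tau_\epsilon(\omega):=\min\{n\geqslant 0:\varphi_c(\S^n\omega)+1\geqslant\epsilon_0\}$ and decompose
$\{\varphi_c<\varphi^-+\epsilon\}=\bigsqcup_{n\geqslant 0}\{\tau_\epsilon=n\}\cap\{\varphi_c<\varphi^-+\epsilon\}$.
By the linearization above, on this event one has $S_n\phi^-(\omega)\geqslant\log(\epsilon_0/\epsilon)-C_1$. Set $\tilde Q(t):=P_S\bigl(t\phi^--\log|S'|\bigr)$, which by standard thermodynamic formalism for finite-branch mixing piecewise-$\CH$ Markov maps is real-analytic and strictly convex, satisfies $\tilde Q(0)=0$ and $\tilde Q'(0)=\lambda_\mac(\varphi^-)<0$, while $\tilde Q$ becomes positive for large $t$ thanks to the measure $\nu^-$ supplied by Hypothesis~\ref{hypo:2} with $\int\phi^-\,d\nu^->0$. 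Hence there is a unique $t^-_*>0$ with $\tilde Q(t^-_*)=0$, which I take as the definition of $t^-_*$.

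For the \textbf{upper bound}, I use a Chebyshev-type estimate with parameter $t\in(0,t^-_*)$. For each $n$, summing the Lebesgue measure of the relevant depth-$n$ cylinders and exploiting bounded distortion for piecewise $\CH$ Markov maps together with the quasi-invariance of $m$ under $S$, one gets
\begin{equation*}
m\bigl(\{\tau_\epsilon=n\}\cap\{\varphi_c<\varphi^-+\epsilon\}\bigr)\leqslant e^{-t(\log(\epsilon_0/\epsilon)-C_1)}\cdot\sum_{C_n}m(C_n)\,e^{tS_n\phi^-}\leqslant K\,\epsilon^{t}\,e^{n\tilde Q(t)}.
\end{equation*}
Since $\tilde Q(t)<0$, summing over $n$ yields $m\{\varphi_c<\varphi^-+\epsilon\}\leqslant K'\epsilon^t$ and letting $t\uparrow t^-_*$ gives $\liminf_{\epsilon\to 0}\log m\{\cdots\}/\log\epsilon\geqslant t^-_*$.

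For the \textbf{lower bound}, I use the equilibrium state $\nu^*$ of $t^-_*\phi^--\log|S'|$, which has $h_{\nu^*}-\int\log|S'|\,d\nu^*+t^-_*\int\phi^-\,d\nu^*=0$; Ruelle's inequality forces $\alpha:=\int\phi^-\,d\nu^*\geqslant 0$, and strict convexity of $\tilde Q$ together with $\tilde Q'(0)<0$ gives $\alpha>0$. Choose $n^*\approx\alpha^{-1}\log(\epsilon_0/\epsilon)$; by the Birkhoff theorem and Gibbs regularity of $\nu^*$, a set of $n^*$-cylinders $C_{n^*}$ with $\nu^*$-measure $\geqslant 1/2$ satisfies $|S_{n^*}\phi^-/n^*-\alpha|<\delta$, and the Gibbs property together with distortion gives
\begin{equation*}
m(C_{n^*})\asymp\nu^*(C_{n^*})\exp\bigl(-t^-_*S_{n^*}\phi^-\bigr)\asymp\nu^*(C_{n^*})\,\epsilon^{t^-_*+o(1)}.
\end{equation*}
On each such $C_{n^*}$ the linearization yields $\varphi_c(\omega)+1\leqslant C_2\epsilon$, so summing $m(C_{n^*})$ produces $m\{\varphi_c<\varphi^-+C_2\epsilon\}\gtrsim\epsilon^{t^-_*+o(1)}$, hence $\limsup\log m\{\cdots\}/\log\epsilon\leqslant t^-_*$.

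The main obstacle I anticipate is the uniform control of the linearization error along arbitrarily long orbits that stay inside $[-1,-1+\epsilon_0]$: one must choose $\epsilon_0$ small enough that the cumulative error does not swamp $\log(\epsilon_0/\epsilon)$, which uses the $\alpha$-H\"older hypothesis in an essential way, and one must also match the Gibbs property of $\nu^*$ with Lebesgue measure on depth-$n$ cylinders; this requires the piecewise $\CH$ distortion bound and the Markov structure in Hypothesis~\ref{hypo:1}. The statement for $t^+_*$ is identical after replacing $\phi^-$ by $\phi^+(\omega):=\log f'_\omega(+1)$.
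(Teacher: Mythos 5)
Your Chebyshev estimate for the upper bound of the measure (equivalently, the lower bound for the exponent) follows the same general route as the paper's: a parameter-$t$ exponential Markov inequality combined with the transfer operator spectral radius bound, using the strict convexity of the pressure function and Hypothesis~\ref{hypo:2} to locate the positive zero $t_*^-$. The paper implements it slightly differently — via Lemma~\ref{lemma:LD-prep1} (a careful inductive distortion estimate relating $f_\omega^n(\varphi^-+\epsilon)-\varphi^-$ to $\epsilon\cdot(f_\omega^n)'(\varphi^-)$) and Lemma~\ref{lemma:LD-prep2} rather than an escape-time decomposition — but the underlying idea is the same. Your linearization claim $\log(\varphi_c(\S^n\omega)+1)=S_n\phi^-(\omega)+\log(\varphi_c(\omega)+1)+O(1)$ is essentially what Lemma~\ref{lemma:LD-prep1} delivers, and you correctly flag that the cumulative H\"older error needs to be controlled; the paper's Lemma~\ref{lemma:LD-prep1} does exactly that inductively.

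The lower bound for the measure (equivalently, the upper bound $\limsup\leq t_*^-$ for the exponent) has a genuine gap. You choose $\nu^*$-typical $n^*$-cylinders $C_{n^*}$ on which $S_{n^*}\phi^-\approx\alpha n^*\approx\log(\epsilon_0/\epsilon)$, and you then assert that ``on each such $C_{n^*}$ the linearization yields $\varphi_c(\omega)+1\leqslant C_2\epsilon$.'' This inference is not valid. The linearization is a \emph{conditional} statement: it holds only along orbit segments that stay in a small neighbourhood of $\varphi^-$, which in turn presupposes that $\varphi_c(\omega)+1$ is already small. Knowing only that $S_{n^*}\phi^-(\omega)$ is large does not force $\varphi_c(\omega)$ to be near $\varphi^-$ — the orbit $\varphi_c(\S^k\omega)$ is recurrent and can sit anywhere in $(\varphi^-,\varphi^+)$ while $(f_\omega^{n^*})'(\varphi^-)$ is large. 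In fact the asserted inclusion $\{|S_{n^*}\phi^-/n^*-\alpha|<\delta\}\subseteq\{\varphi_c<\varphi^-+C_2\epsilon\}$ would, with the bounded distortion you invoke, force $m\{\varphi_c<\varphi^-+C_2\epsilon\}\asymp\epsilon^{t_*^-}$ with a matching lower bound for \emph{every} choice of constants, which is stronger than what is true.

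What is missing is precisely the content of the paper's Lemma~\ref{lemma:LD-prep3}. The paper does not try to deduce that a given $\omega$ has $\varphi_c(\omega)$ small; instead it uses a pull-back argument on monotonicity intervals $U$ of $S^n$: when $f_{\omega_U}^n(\varphi^-+\epsilon)-\varphi^-\geqslant\beta$, the image $S^n(U)$ contains a full Markov interval $K$ (by mixing), a fixed positive Lebesgue-fraction of points in $K$ satisfy $\varphi_c<\varphi^-+\beta e^{-D}$ (this is where intermingledness, i.e.\ Hypothesis~\ref{hypo:2}, is used), and the two-sided distortion bound transports that fraction back to a uniform fraction of $U$ satisfying $\varphi_c<\varphi^-+\epsilon$. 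The transition from ``$(f_\omega^n)'(\varphi^-)$ is large'' to ``$f_\omega^n(\varphi^-+\epsilon)-\varphi^-$ is bounded below'' is then achieved not by linearization (which fails exactly when the orbit escapes the neighbourhood) but by the one-sided Koebe inequality \eqref{eq:os-Koebe}, which is where negative Schwarzian derivative enters in an essential way. Finally, the rate is extracted via a tailored choice $n_\epsilon=\lceil(p^-)'(t_*^-)^{-1}|\log\epsilon|\rceil$ and the Plachky--Steinebach large-deviation theorem, rather than Birkhoff typicality for the equilibrium state $\nu^*$ directly. Without these ingredients your argument for the upper bound on the exponent does not go through.
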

The numbers $t^\pm_*$ are uniquely determined as positive zeros of the pressure functions 
\begin{equation}\label{eq:one-parameter-pressure}
\begin{split}
t\mapsto p^\pm(t):= &p(-\log |\S'|+t\log f'_\bullet(\varphi^\pm))\\
=& \sup_{\nu\in\cP(\S)}\left(h_\nu(\S)-\int\log|S'|\,d\nu+t\lambda_\nu(\varphi^\pm)\right). 
\end{split}
\end{equation}
({To be more precise, we mean the pressure function of the topological Markov chain that encodes $S$.})
Indeed, $p^\pm(0)=0$ and $(p^\pm)'(0)=\lambda_\mac(\varphi^\pm)<0$ by Hypothesis~\ref{hypo:2} (see e.g. \cite{ParryPollicott,keller-book}), so that the convex functions $p^-$ and $p^+$ have  unique positive zeros provided $\sup_{s>0} p^+(s)>0$ and $\sup_{s>0}p^-(s)>0$. This latter property is guaranteed by the existence of the measures $\nu^-$ and $\nu^+$ from Hypothesis~\ref{hypo:2}. 

\begin{remark}\label{remark:comparison1}
This formula for $t_*^\pm$ is in good agreement with the formula for the corresponding quantity $\eta$ in \cite{Ott1994a,Ott1993}. As the authors work with a diffusion approximation and restrict to negative $\lambda_\mac(\varphi^\pm)$ close to zero, $t_*^\pm$ can be approximated by the positive zero of the log-Laplace transform of a normal distribution with mean $\lambda_\mac(\varphi^\pm)=(p^\pm)'(0)$ and variance $\sigma^2:=(p^\pm)''(0)$. This log-Laplace transform has the form $t\mapsto \lambda_\mac(\varphi^\pm)\cdot t+\frac{\sigma^2t^2}{2}$ -- so its positive zero is $-\frac{2\lambda_\mac(\varphi^\pm)}{\sigma^2}=\frac{|\lambda_\mac(\varphi^\pm)|}{D}=\eta$, where we used the convention $\sigma^2=2D$ as in \cite{Ott1994a,Ott1993}.
Numerical studies in \cite{Ott1994a,Ott1993} and later in \cite{Pereira2008} confirm this approximate formula for the exponent.
\end{remark}

\begin{theorem}\label{theo:main}
Let $F\in\cF_s$ and assume Hypotheses~\ref{hypo:1} -~\ref{hypo:3}. 
For any Gibbs measure $\nu\in\cP_e(\S)$ denote
$\Delta_\nu(\varphi^\pm):=\int\log|\S'|\,d\nu- \lambda_\nu(\varphi^\pm)$.
Then, if $\nu=\mac$, we have for $\nu$-a.e. $\omega\in\I$,
\begin{equation}\label{eq:cases}
\sigma(\omega,x)=
\begin{cases}
\sigma^+(\omega,x)=t_*^-\cdot\frac{\Delta_\nu(\varphi^-)}{\int\log|\S'|\,d\nu}>0&\text{ if }x=\varphi^-\text{ and }\Delta_\nu(\varphi^-)>0\\
\sigma^+(\omega,x)=t_*^-\cdot\frac{-\lambda_\nu(\varphi^-)}{\int\log|\S'|\,d\nu}>0&\text{ if }x\in(\varphi^-,\varphi_c(\omega))\\
0&\text{ if }x=\varphi_c(\omega)\in(\varphi^-,\varphi^+)\\
-\sigma^-(\omega,x)=t_*^+\cdot\frac{\lambda_\nu(\varphi^+)}{\int\log|\S'|\,d\nu}<0&\text{ if }x\in(\varphi_{c}(\omega),\varphi^+)\\
-\sigma^-(\omega,x)=t_*^+\cdot\frac{-\Delta_\nu(\varphi^+)}{\int\log|\S'|\,d\nu}<0
&\text{ if }x=\varphi^+\text{ and }\Delta_\nu(\varphi^+)>0\\
\end{cases}
\end{equation}
The same formula holds for general Gibbs measures $\nu\in\cP_e(\S)$, except possibly in the case $x=\varphi_c(\omega)\in(\varphi^-,\varphi^+)$.\footnote{If $\nu\in\cP_e(\S)$ is a Gibbs measure with $\lambda_\nu(\varphi_c)\geqslant\int\log|S'|\,d\nu$, then eq.~\eqref{eq:cases} continues to hold also for $x=\varphi_c(\omega)\in(\varphi^-,\varphi^+)$. This follows from eq.~\eqref{eq:sigma+upper} and the one-sided Koebe principle eq.~\eqref{eq:os-Koebe} below, because, with the notation from eq.~\eqref{eq:sigma+upper}, $f_\omega^{n_k}(x+\epsilon_k)-\varphi^-\geqslant\min\left\{1,\frac{\epsilon_k}{2}(f_\omega^{n_k})'(\varphi_c(\omega))\right\}$.}
\end{theorem}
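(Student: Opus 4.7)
The plan is to address the five regimes of \eqref{eq:cases} in turn, combining three inputs: $F$-invariance of $\sigma^\pm$, the global tail estimate of Theorem~\ref{theo:exponential}, and the Markov-plus-bounded-distortion structure of $S$ from Hypothesis~\ref{hypo:1}. By the $\varphi^-\leftrightarrow\varphi^+$ symmetry it suffices to handle $x=\varphi^-$, $x\in(\varphi^-,\varphi_c(\omega))$, and $x=\varphi_c(\omega)$.

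For the boundary case $x=\varphi^-$ I write $h(\omega'):=\varphi_c(\omega')-\varphi^-$ so that
\[
m^2\bigl(U_\epsilon(\omega_0,\varphi^-)\cap\basin^+\bigr)
=\int_{B_\epsilon(\omega_0)}(\epsilon-h(\omega'))_+\,d\omega'
=\int_0^\epsilon m\bigl(B_\epsilon(\omega_0)\cap\{h<s\}\bigr)\,ds.
\]
The crux is to estimate $m(B_\epsilon(\omega_0)\cap\{h<s\})$ for $\nu$-a.e.\ $\omega_0$. The plan is to pick $n=n(\omega_0,\epsilon)$ so that the $n$-cylinder $C_n\ni\omega_0$ of $S$ has length $\asymp\epsilon$; Birkhoff under $\nu$ delivers $n\sim\log(1/\epsilon)/\int\log|S'|\,d\nu$ and $(f^n_{\omega_0})'(\varphi^-)\asymp\epsilon^{-\lambda_\nu(\varphi^-)/\int\log|S'|\,d\nu}$. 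The invariant-graph identity $h\circ S=f_\bullet(\varphi_c)-f_\bullet(\varphi^-)$ combined with the one-sided Koebe principle for negative Schwarzian gives $h(\omega)\asymp h(S^n\omega)/(f^n_{\omega_0})'(\varphi^-)$ whenever the orbit stays in a small neighborhood of $\varphi^-$, and bounded distortion of $S^n|_{C_n}:C_n\to\I$ transfers the local count on $C_n$ to the global count of Theorem~\ref{theo:exponential} on $\I$, yielding $m(C_n\cap\{h<s\})\asymp\epsilon\cdot(s\,(f^n_{\omega_0})'(\varphi^-))^{t_*^-}$. Integrating over $s\in(0,\epsilon)$ and dividing by $4\epsilon^2$ produces $\Sigma^+_\epsilon\asymp\epsilon^{t_*^-\Delta_\nu(\varphi^-)/\int\log|S'|\,d\nu}$, which is the exponent claimed in the first line of \eqref{eq:cases}.

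For the interior case $x_0\in(\varphi^-,\varphi_c(\omega_0))$ I use $F$-invariance $\sigma^+(\omega_0,x_0)=\sigma^+(F^n(\omega_0,x_0))$. Since $F^n$ has bounded distortion on $U_\epsilon(\omega_0,x_0)$, the ratio $\Sigma^+_\epsilon(\omega_0,x_0)$ coincides up to bounded factors with the corresponding ratio in the image near-rectangle of horizontal length $\asymp\epsilon\,(S^n)'(\omega_0)$ and vertical length $\asymp\epsilon\,(f^n_{\omega_0})'(x_0)$ about $F^n(\omega_0,x_0)$. Choosing $n$ so the horizontal side is $\asymp 1$ and using Koebe to compare $(f^n_{\omega_0})'(x_0)$ with $(f^n_{\omega_0})'(\varphi^-)$, the vertical side becomes $\delta\asymp\epsilon^{\Delta_\nu/\int\log|S'|\,d\nu}$ while the image point sits at height $x_0^{(n)}-\varphi^-\asymp\epsilon^{-\lambda_\nu/\int\log|S'|\,d\nu}$, so $\delta/(x_0^{(n)}-\varphi^-)\asymp\epsilon\to 0$. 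In the image rectangle the set of $\omega'$ whose full vertical slice lies in $\basin^+$ is $\{\varphi_c<x_0^{(n)}-\delta\}$, of mass $\asymp(x_0^{(n)}-\varphi^-)^{t_*^-}$ by Theorem~\ref{theo:exponential}; dividing by the rectangle's area $\asymp\delta$ yields $\Sigma^+_\epsilon\asymp\epsilon^{-t_*^-\lambda_\nu(\varphi^-)/\int\log|S'|\,d\nu}$. For the critical case $x=\varphi_c(\omega)$ with $\nu=\mac$, Proposition~\ref{prop:two-basins}(a) gives $\varphi_c^-=\varphi_c^+$ $\mac$-a.e., and at an approximate-continuity point of $\varphi_c$ both $\Sigma^\pm_\epsilon$ stay bounded away from $0$ and $1$, forcing $\sigma^\pm=0$; the footnote extension to Gibbs $\nu$ combines the general upper bound on $\sigma^+$ with the one-sided Koebe inequality to push the vertical scale down to $\epsilon$.

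The main obstacle I foresee is matching the lower and upper bounds at the critical scale $\epsilon$. The upper bound on $m(B_\epsilon(\omega_0)\cap\{h<s\})$ follows quickly from the distortion transfer and Theorem~\ref{theo:exponential}, but the lower bound requires the linearization $h(S\omega)\approx f'_\omega(\varphi^-)h(\omega)$ to retain bounded multiplicative error along the whole $n$-cylinder. This is precisely where Hypothesis~\ref{hypo:3} and the Gibbs/transfer-operator machinery for $S$ must intervene, and it is also the reason the boundary formula degenerates outside the regime $\Delta_\nu(\varphi^\pm)>0$, where the rare-event interpretation of $\{h<\epsilon\}$ breaks down.
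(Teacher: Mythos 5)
Your overall architecture matches the paper's: reduce the two-dimensional area ratio to a one-dimensional measure of the sub-level set $\{\varphi_c<x+O(\epsilon)\}$ near $\omega_0$, transfer via a bounded-distortion $n$-cylinder (the paper uses the intervals $V_{\epsilon_k}(\omega)$) to the global tail estimate of Theorem~\ref{theo:exponential}, and control the pull-back of the vertical scale by a Koebe-type linearization along the fibre. Your Fubini identity for the boundary case $x=\varphi^-$ is a harmless variant of the sandwich inclusions in Lemma~\ref{lemma:dim-reduction}, and the distortion machinery you flag as the main obstacle is exactly what Lemmas~\ref{lemma:LD-prep1} and~\ref{lemma:dist-folklore} supply, so those parts of your sketch can be made rigorous along essentially the same lines.

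The treatment of the critical case $x=\varphi_c(\omega)\in(\varphi^-,\varphi^+)$, however, has a genuine gap. You assert that at an approximate-continuity point of $\varphi_c$ both $\Sigma^\pm_\epsilon$ stay bounded away from $0$ and $1$. Approximate continuity only controls the density of $\{|\varphi_c-\varphi_c(\omega_0)|>\delta\}$ for a \emph{fixed} $\delta$, while $\Sigma^\pm_\epsilon(\omega_0,\varphi_c(\omega_0))$ depends on the behaviour of $\varphi_c-\varphi_c(\omega_0)$ at scale $\epsilon$. For instance, if $\varphi_c$ locally behaves like $\varphi_c(\omega_0)-\sqrt{|\omega-\omega_0|}$ --- a point of genuine, not merely approximate, continuity --- a direct computation gives $\Sigma^-_\epsilon(\omega_0,\varphi_c(\omega_0))\sim\epsilon/6$, hence $\sigma^-(\omega_0,\varphi_c(\omega_0))=1\neq 0$. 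So approximate continuity cannot force both $\Sigma^\pm_\epsilon$ to stay bounded below; you need a quantitative, dynamics-dependent statement that for $\mac$-a.e.\ $\omega_0$ the iterates $\varphi_c(\S^{n_k}\omega_0)$ do not approach $\varphi^\pm$ polynomially fast. The paper gets this by exploiting $\varphi_c(\S^{n_k}\omega)=f^{n_k}_\omega(\varphi_c(\omega))$ together with Theorem~\ref{theo:exponential} and a Borel--Cantelli argument, and then feeds the conclusion into the bounds \eqref{eq:sigma+upper}--\eqref{eq:sigma+lower}. Your proof would have to import an argument of this kind; Lusin/Denjoy alone does not suffice.
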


\begin{remark}\label{remark:main}
\begin{enumerate}[a)]
\item The requirement that $\nu\in\cP_e(\S)$ is a Gibbs measure can be considerably relaxed, see \cite[Remark 5]{Keller2012a}.
\item Because of Hypothesis~\ref{hypo:2}, $\Delta_{\mac}(\varphi^\pm)>0$; so for $\nu=\mac$ the above five possibilities form an exhaustive list.
\item Note that $\Delta_\nu(\varphi^-)>0$ in case 2 and $\Delta_\nu(\varphi^+)>0$ in case 4. In case 3, $\lambda_\nu(\varphi^\pm)>0$ by Proposition~\ref{prop:two-basins}, so that $\Delta_\nu(\varphi^\pm)>0$.
\item For general Gibbs measures $\nu\in\cP_e(S)$, I cannot exclude the possibility that $\Delta_\nu(\varphi^-)\leqslant0$ or $\Delta_\nu(\varphi^+)\leqslant0$. In that case the analysis is much more involved. Indeed, 
 if $x=\varphi^-$ and $\Delta_\nu(\varphi^-)\leqslant0$,
one can show that
$\sigma(\omega,x)=-\sigma^-(\omega,x)=\frac{\Delta_\nu(\varphi^-)}{\int\log|\S'|\,d\nu}\left(\frac{t_*^+|\lambda^+|}{\lambda^-}-(t_*^+|\lambda^+|-1)^+\right)$, but I do not attempt to provide a proof in this paper. A similar result holds when
$x=\varphi^+$ and $\Delta_\nu(\varphi^+)\leqslant0$. 
\end{enumerate}
\end{remark}

\subsection{The uncertainty exponent}

Let $F\in\cF_s$ be as before. As in \cite{Grebogi1985,Ott1994a,Ott1993}
we define the uncertainty exponent of $F$ in the following way: Choose $x\in\inn(\J)$ and denote by $\Pi_{\epsilon,x}$ the probability that two  points chosen at random with distance at most $\epsilon$ on the line $\I\times\{x\}$ belong to different basins. More formally,
\begin{equation*}
\begin{split}
\Pi_{\epsilon,x}
&=
(2\epsilon)^{-1}\cdot m^2\left\{(\omega,\omega')\in\I^2: (\omega,x)\in\basin^-, (\omega',x)\in\basin^+, |\omega'-\omega|<\epsilon\right\}\\
&=
(2\epsilon)^{-1}\cdot m^2\left\{(\omega,\omega')\in\I^2: \varphi_c(\omega)>x, \varphi_c(\omega')<x, |\omega'-\omega|<\epsilon\right\}\ .
\end{split}
\end{equation*}

\begin{theorem}\label{theo:uncertainty}
Let $F\in\cF_s$ and assume Hypotheses~\ref{hypo:1} -~\ref{hypo:3}.
For every $x\in\inn(\J)$ we have
\begin{equation*}
\limsup_{\epsilon\to0}\frac{\log\Pi_{\epsilon,x}}{\log\epsilon}
\leqslant
\phi\ ,
\end{equation*}
where $\phi$ is determined by the pressure function
\begin{equation}\label{eq:pressure-function}
(\lambda^-,\lambda^+,\lambda^S)\mapsto p\left(\lambda^-\log f_\bullet'(\varphi^-)+\lambda^+\log f_\bullet'(\varphi^+)+(\lambda^S-1)\log|S'|\right).
\end{equation}
Details on $\phi$ are provided in Equations
(\ref{eq:phi1}) and (\ref{eq:phi}) in Section~\ref{subsec:Proof_Theo-uncertainty}.
\end{theorem}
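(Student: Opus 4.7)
Since $\log\epsilon<0$, the claim $\limsup_{\epsilon\to 0}\log\Pi_{\epsilon,x}/\log\epsilon\leq\phi$ is equivalent to a lower bound $\Pi_{\epsilon,x}\geq\epsilon^{\phi+o(1)}$ as $\epsilon\to 0$. My plan is to construct, for each small $\epsilon$, enough separating pairs $(\omega,\omega')$ with $|\omega-\omega'|<\epsilon$ and $\varphi_c(\omega)>x>\varphi_c(\omega')$ by propagating the macroscopic crossing behaviour of $\varphi_c$ down to small scales via the invariant-graph identity $\varphi_c(\omega)=(f_\omega^n)^{-1}(\varphi_c(S^n\omega))$, and counting the resulting pieces via thermodynamic formalism for $S$.

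The first step is a cylinder reduction. Let $\cC_n$ be the set of $n$-cylinders of the Markov partition of $S$; bounded distortion gives $|C|\asymp\exp(-S_n\log|S'|(\omega_C))$ for $\omega_C\in C$, and any two points of $C$ lie at distance $\leq|C|$. Restricting to pairs in a common cylinder,
\begin{equation*}
\Pi_{\epsilon,x}\;\geq\;(2\epsilon)^{-1}\sum_{C\in\cC_n,\;|C|\leq\epsilon} m\bigl(\{\varphi_c>x\}\cap C\bigr)\cdot m\bigl(\{\varphi_c<x\}\cap C\bigr).
\end{equation*}
I then index cylinders by their ``type'' $(a^-,a^+,a^S):=n^{-1}\bigl(S_n\log f'_\bullet(\varphi^-),\,S_n\log f'_\bullet(\varphi^+),\,S_n\log|S'|\bigr)(\omega_C)$, essentially constant on $C$ by Hypothesis~\ref{hypo:3}. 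Transferring $m(\{\varphi_c\gtrless x\}\cap C)$ to $\I$ via $S^n$ and using $\varphi_c|_C=(f_\omega^n)^{-1}\circ\varphi_c\circ S^n$, combined with bounded distortion of $S^n$ and the one-sided Koebe principle (from $\cS f_\omega<0$), I obtain, for cylinders of suitable type --- those whose ``central'' orbit $\{f_{\omega_C}^k(x)\}_{k\leq n}$ stays in the interior of $(\varphi^-,\varphi^+)$ --- the quantitative lower bound $m(\{\varphi_c>x\}\cap C)\cdot m(\{\varphi_c<x\}\cap C)\gtrsim c\cdot|C|^2$, with $c>0$ depending only on the distance of $x$ from $\varphi^\pm$.

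By the large-deviations (multifractal) formalism for mixing piecewise-$\CH$-expanding Markov maps via Ruelle transfer operators (cf.~\cite{ParryPollicott,keller-book}), the cardinality of $n$-cylinders with type in an $o(1)$-neighbourhood of $(a^-,a^+,a^S)$ equals $\exp(nI(a^-,a^+,a^S)+o(n))$, where $I$ is the Legendre transform of the trivariate pressure $(\lambda^-,\lambda^+,\lambda^S)\mapsto p(\lambda^-\log f'_\bullet(\varphi^-)+\lambda^+\log f'_\bullet(\varphi^+)+\lambda^S\log|S'|)$ from~\eqref{eq:pressure-function}. Fixing $n\sim|\log\epsilon|/a^S$ so those cylinders have length $\asymp\epsilon$, the contribution of type $(a^-,a^+,a^S)$ to the lower bound is
\begin{equation*}
\gtrsim\epsilon^{-1}\cdot e^{nI(a^-,a^+,a^S)}\cdot e^{-2n a^S}=\epsilon^{1-I(a^-,a^+,a^S)/a^S+o(1)}.
\end{equation*}
Optimizing over admissible types (those realising the straddling property above) and converting to the pressure side via Lagrangian duality returns $\phi$ as the variational quantity encoded by~\eqref{eq:pressure-function}, in agreement with the specific formulae to appear as~\eqref{eq:phi1}--\eqref{eq:phi}.

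The hard part is the geometric per-cylinder estimate: proving the uniform lower bound on $m(\{\varphi_c\gtrless x\}\cap C)$ of order $|C|$ for each factor, and verifying that the admissible types realise the variational optimum. The orbit of $x$ typically converges quickly to $\varphi^\pm$, so ``straddling'' cylinders are exponentially rare; selecting them requires the three independent Birkhoff averages in~\eqref{eq:pressure-function} --- rather than the single parameter $t$ of Theorem~\ref{theo:exponential} --- because one must track jointly the cylinder length and the dynamical character of $\{f_{\omega_C}^k(x)\}$ as it wanders through neighbourhoods of $\varphi^-$ and $\varphi^+$ before the straddling window closes.
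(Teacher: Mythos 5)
Your opening reduction is essentially the paper's: the paper also restricts to pairs lying in a common piece of size comparable to $\epsilon$ (a Moran cover $\cU_\epsilon$ of monotonicity intervals rather than fixed-$n$ cylinders) and bounds $\Pi_{\epsilon,x}$ below by $(2\epsilon)^{-1}\sum_Z m(\{\varphi_c>x\}\cap Z)\,m(\{\varphi_c<x\}\cap Z)$. The gap is in the core of your plan: selecting ``straddling'' cylinders by their type $(a^-,a^+,a^S)$, asserting $m(\{\varphi_c\gtrless x\}\cap C)\gtrsim |C|$ for them with a constant depending only on the distance of $x$ from $\varphi^\pm$, and then counting them by the trivariate LDP. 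Straddling cannot be read off from the type: the one-sided Koebe inequality \eqref{eq:os-Koebe} only gives ``$(f^n_\omega)'(\varphi^-)$ not small $\Rightarrow$ $f^n_\omega(x)$ not close to $\varphi^-$'', never the converse, so the only types for which your per-cylinder bound is certifiable are (essentially) those with $a^-\geqslant 0$ \emph{and} $a^+\geqslant 0$. But negative Schwarzian derivative (Proposition~\ref{prop:JBM}, quantified in the paper's proof by $\sup_\lambda\bigl(\frac{\partial\psi}{\partial\lambda^-}+\frac{\partial\psi}{\partial\lambda^+}\bigr)<0$) forces all attainable exponent pairs to satisfy $a^-+a^+\leqslant -c_0<0$, so the rate function is $+\infty$ on your admissible region: such cylinders are super-polynomially rare in $\epsilon$, and the restricted sum yields no finite exponent at all. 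If instead you select cylinders by the genuine geometric condition that the orbit of $x$ stays in the interior, that event is governed by $\varphi_c$ itself (cf.\ the remark after the theorem invoking $\lambda_\mac(\varphi_c)$) and is not counted by the LDP for $(a^-,a^+,a^S)$, so the product of ``count of types'' with your per-cylinder bound is unjustified. Either way, the claimed constant ``depending only on the distance of $x$ from $\varphi^\pm$'' is wrong: what enters is the distance of the transported point $f^{n_C}_{\omega_C}(x)$ from $\varphi^\pm$, which is typically exponentially small.

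The paper's proof keeps \emph{all} pieces and replaces your order-$|C|$ bound by the quantitative inputs $m(Z)\,\Phi^+\bigl(\kappa(\varphi^+-f^{n_Z+s}_{\tilde\omega}(x))\bigr)$ and $m(Z)\,\Phi^-\bigl(\kappa(f^{n_Z+s}_{\tilde\omega}(x)-\varphi^-)\bigr)$, where the polynomial decay of the tail functions $\Phi^\pm$ with exponents $t_*^\pm$ is exactly Theorem~\ref{theo:exponential}; Koebe then converts the (exponentially small) distances into the endpoint Birkhoff exponents, and the resulting integral is handled by Varadhan's lemma for the time-changed trivariate process (G\"artner--Ellis via Lemma~\ref{lemma:Laplace}, the Dembo--Zajic sample-path LDP, and the Duffy--Rodgers-Lee time change for the random $n_\epsilon$), with $H(x^-,x^+,y)=\min\{t_*^-x^-,t_*^+x^+,0\}$. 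In the variational formula \eqref{eq:phi1} the optimum sits at $x^\pm_*<0$: the dominant contribution comes from pieces on which \emph{both} boundary graphs are attracting, the two factors being polynomially small while such pieces are only polynomially rare --- precisely the regime your restriction discards. So your scheme, as stated, does not produce the exponent $\phi$ of \eqref{eq:phi1}--\eqref{eq:phi}; the missing ideas are the tail estimates of Theorem~\ref{theo:exponential} applied at the transported point and the Varadhan-type optimization over all (not just straddling-certified) deviation types.
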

\begin{remark}
One cannot expect this estimate to be sharp, because the asymptotics of $\Pi_{\epsilon,x}$ depend crucially on the large deviations behaviour of the time a trajectory needs to enter the final regime, when it is attracted to either $\varphi^-$ or $\varphi^+$. But this is governed (at best) by $\lambda_\mac(\varphi_c)$, an exponent of which we do not know more than the bounds $0<\lambda_\mac(\varphi_c)<\min\{-\lambda_\mac(\varphi^-),
-\lambda_\mac(\varphi^+)\}$ (because of negative Schwarzian derivative). As this is the exponent of a measure supported by the graph of the highly discontinuous function $\varphi_c$, it is unlikely to be characterizable by thermodynamic quantities related to the base transformation.

This is only one reason why it would be difficult to compare our $\phi$ to the approximate expression for $\phi$ derived in \cite{Ott1994a,Ott1993} or to the numerical results provided in for $\phi$ provided in \cite{Ott1994a,Ott1993,Pereira2008}.
Further issues are that the authors of \cite{Ott1994a,Ott1993} treat riddled basins, which can be modeled by a single random walk, while we need for the intermingled case two random walks (one for each attractor) that are moreover negatively correlated. 

We return to this in Remark~\ref{remark:Ott} after the proof of Theorem~\ref{theo:uncertainty}, where we derive an approximate formula for $\phi$ that yields a value twice as large as that used in \cite{Ott1994a,Ott1993,Pereira2008}. This might explain why not only in \cite[Figure 13]{Ott1994a}, but also in \cite[Figure 6b (small $\kappa$)]{Pereira2008}, the numerically observed values for $\phi$ are roughly twice as large as the values suggested by the formula used there.
%
\end{remark}

\section{Proofs}

\subsection{Distortion estimates}

\begin{remark}\label{remark:distortion}
Denote
by $\cU_n(\omega)$ the family of all interval neighbourhoods $U$ of $\omega \in\I$ such that $\S^n_{|U}:U\to \S^nU$ is a diffeomorphism.
It is a well known fact (e.g. \cite{CFS-book}) that under Hypotheses~\ref{hypo:1} and~\ref{hypo:2} there is a distortion constant $D\geqslant1$ such that
for all $n>0$, all $\omega\in\I$, all $U\in\cU_n(\omega)$ and all ${\tilde\omega}\in U$
\begin{equation}\label{eq:distortion1}
e^{-D}\leqslant
\left|\frac{(\S^n)'(\tilde\omega)}{(\S^n)'(\omega)}\right|,\;
\left|\frac{(f^n_{\tilde\omega})'(\varphi^\pm)}{(f^n_{\omega})'(\varphi^\pm)}\right|
\leqslant e^D
\end{equation}
and \cite[Lemma 2.6]{Ott2009}
\begin{equation}\label{eq:distortion2}
\left|\log\frac{(f^n_{\tilde\omega})'(\tilde{x})}{(f^n_{\omega})'({x})}\right|
\leqslant D\cdot\left(|\S^n(\tilde{\omega})-\S^n(\omega)|+
\sum_{i=0}^{n-1}|f_{\omega}^i(\tilde{x})-f_\omega^i(x)|^\alpha\right)
\quad\text{for all  $x,\tilde{x}\in\J$.} 
\end{equation}
\end{remark}
The following lemma is an immediate consequence of (\ref{eq:distortion2}).
\begin{lemma}\label{lemma:dist-folklore}
If $\lim_{n\to\infty}|f_\omega^n(\tilde x)-f_\omega^n(x)|=0$, then, for each $\delta>0$ there is $C=C(x,\tilde{x},\omega,\delta)>0$ such that
\begin{displaymath}
|\tilde{x}-x|\,(f_\omega^n)'(x)\,e^{-C-n\delta}
\leqslant
|f_\omega^n(\tilde{x})-f_\omega^n(x)|
\leqslant
|\tilde{x}-x|\,(f_\omega^n)'(x)\,e^{C+n\delta}
\end{displaymath}
for all $n\in\N$.
\end{lemma}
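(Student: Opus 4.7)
The plan is to reduce the claim to the distortion estimate \eqref{eq:distortion2} via the mean value theorem, and then to control the resulting error sum by a Ces\`aro-type argument using the hypothesis that the two orbits synchronize.

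Concretely, I would first pick $\xi$ strictly between $x$ and $\tilde{x}$ with
\[
f_\omega^n(\tilde{x})-f_\omega^n(x)=(f_\omega^n)'(\xi)\cdot(\tilde{x}-x),
\]
so that the claim reduces to a two-sided bound on the ratio $(f_\omega^n)'(\xi)/(f_\omega^n)'(x)$ of the form $e^{\pm(C+n\delta)}$. Setting $\tilde\omega=\omega$ in \eqref{eq:distortion2} makes the first summand on the right-hand side vanish and yields
\[
\left|\log\frac{(f^n_{\omega})'(\xi)}{(f^n_{\omega})'(x)}\right|
\leqslant D\sum_{i=0}^{n-1}|f_\omega^i(\xi)-f_\omega^i(x)|^\alpha.
\]
Since each $f_\omega$ is a diffeomorphism of $\J$ fixing the endpoints, it is orientation-preserving, so $f_\omega^i$ is monotone and $f_\omega^i(\xi)$ lies between $f_\omega^i(x)$ and $f_\omega^i(\tilde x)$. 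Hence $|f_\omega^i(\xi)-f_\omega^i(x)|\leqslant|f_\omega^i(\tilde x)-f_\omega^i(x)|$, and the estimate becomes
\[
\left|\log\frac{(f^n_{\omega})'(\xi)}{(f^n_{\omega})'(x)}\right|
\leqslant D\sum_{i=0}^{n-1}|f_\omega^i(\tilde x)-f_\omega^i(x)|^\alpha.
\]

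The remaining task is to show that for any $\delta>0$ there is $C=C(x,\tilde x,\omega,\delta)$ such that this sum is bounded by $C+n\delta$. This is where the convergence hypothesis enters: because $|f_\omega^i(\tilde x)-f_\omega^i(x)|\to 0$, one can choose $N=N(x,\tilde x,\omega,\delta)$ with $D|f_\omega^i(\tilde x)-f_\omega^i(x)|^\alpha<\delta$ for every $i\geqslant N$. Splitting the sum at $N$ and absorbing the finitely many initial terms into a constant $C$ gives
\[
D\sum_{i=0}^{n-1}|f_\omega^i(\tilde x)-f_\omega^i(x)|^\alpha
\leqslant C+(n-N)\delta\leqslant C+n\delta.
\]
Exponentiating yields $(f_\omega^n)'(x)\,e^{-C-n\delta}\leqslant(f_\omega^n)'(\xi)\leqslant(f_\omega^n)'(x)\,e^{C+n\delta}$, and multiplying by $|\tilde x-x|$ gives the desired two-sided bound on $|f_\omega^n(\tilde x)-f_\omega^n(x)|$.

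I do not foresee a genuine obstacle: this is essentially the classical bounded-distortion folklore lemma, and the only non-trivial ingredient beyond \eqref{eq:distortion2} is the observation that the sum of $\alpha$-th powers of a sequence tending to zero grows sublinearly, which yields the $n\delta$ slope for arbitrarily small $\delta$. The one detail worth noting is that the constant $C$ genuinely depends on $\omega$, $x$, $\tilde x$ and $\delta$ through the rate at which the orbits synchronize, so the lemma is truly pointwise and not a uniform statement.
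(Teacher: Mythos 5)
Your proof is correct and is precisely what the paper means when it says the lemma is an immediate consequence of \eqref{eq:distortion2}: the mean value theorem plus the pointwise distortion bound with $\tilde\omega=\omega$, followed by the observation that a sum of $\alpha$-th powers of a null sequence grows sublinearly. The paper does not spell out a proof, so there is nothing further to compare.
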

Another consequence is:
\begin{lemma}\label{lemma:dist-prep}
Let $n>0$, $\omega\in\I$ and $U\in\cU_n(\omega)$. Then
\begin{equation}\label{eq:distortion3}
e^{-D\cdot|\S^n(\tilde{\omega})-\S^n(\omega)|}
\leqslant
\frac{f_{\tilde{\omega}}^n(\tilde x)-f_{\tilde{\omega}}^n(x)}{f_\omega^n(\tilde x)-f_\omega^n(x)}\leqslant e^{D\cdot|\S^n(\tilde{\omega})-\S^n(\omega)|}
\quad\text{for all  $x,\tilde{x}\in\J$ and ${\tilde\omega}\in U$,} 
\end{equation}
and if $h:\S^n(U)\to\J$ is such that $\{(z,h(z)):z\in\S^n(U)\}=F^n(U\times\{x\})$,
i.e. $h(\S^n(\tilde{\omega}))=f_{\tilde{\omega}}^n(x)$,
 then $\left|\log\frac{h(z)-\varphi^-}{h(\S^n(\omega))-\varphi^-}\right|\leqslant D\cdot|z-\S^n(\omega)|$ 
for all $z\in \S^n(U)$ 
 (and similarly for $\varphi^+$).
\end{lemma}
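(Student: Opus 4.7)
The plan is to reduce the displayed ratio bound to a pointwise bound on the derivatives $(f^n_{\omega'})'(u)$ as $\omega'$ varies, and then to deduce the statement about $h$ as a special case of the ratio bound with one endpoint taken to be $\varphi^-=-1$.

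For the first estimate, I would write, say for $x<\tilde x$,
\begin{equation*}
f^n_{\omega'}(\tilde x)-f^n_{\omega'}(x)=\int_x^{\tilde x}(f^n_{\omega'})'(u)\,du
\end{equation*}
for each $\omega'\in U$, and then apply the distortion estimate \eqref{eq:distortion2} with \emph{the same} fibre point $u$ in both numerator and denominator. This is the decisive use of the hypothesis: when we feed $x=\tilde x=u$ into \eqref{eq:distortion2}, the vertical sum $\sum_{i=0}^{n-1}|f^i_\omega(\tilde x)-f^i_\omega(x)|^\alpha$ vanishes identically, leaving
\begin{equation*}
\left|\log\frac{(f^n_{\tilde\omega})'(u)}{(f^n_\omega)'(u)}\right|\leqslant D\cdot|\S^n(\tilde\omega)-\S^n(\omega)|
\quad\text{for every }u\in\J.
\end{equation*}
Since this pointwise estimate does not depend on $u$, it integrates through the integral representation to give
\begin{equation*}
e^{-D|\S^n(\tilde\omega)-\S^n(\omega)|}\leqslant\frac{f^n_{\tilde\omega}(\tilde x)-f^n_{\tilde\omega}(x)}{f^n_\omega(\tilde x)-f^n_\omega(x)}\leqslant e^{D|\S^n(\tilde\omega)-\S^n(\omega)|},
\end{equation*}
independently of the choice of $x,\tilde x\in\J$ and of their order.

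For the second estimate, I would exploit the fact that $\varphi^-\equiv-1$ is invariant, so $f^n_{\omega'}(-1)=-1=\varphi^-$ for every $\omega'\in U$. Setting $\tilde\omega$ such that $z=\S^n(\tilde\omega)$, so that $h(z)=f^n_{\tilde\omega}(x)$ and $h(\S^n(\omega))=f^n_\omega(x)$, the first estimate applied to the pair $(-1,x)$ in place of $(x,\tilde x)$ gives
\begin{equation*}
\frac{h(z)-\varphi^-}{h(\S^n(\omega))-\varphi^-}
=\frac{f^n_{\tilde\omega}(x)-f^n_{\tilde\omega}(-1)}{f^n_\omega(x)-f^n_\omega(-1)}\in\bigl[e^{-D|z-\S^n(\omega)|},\,e^{D|z-\S^n(\omega)|}\bigr],
\end{equation*}
and the claim follows by taking logarithms. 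The $\varphi^+$ case is identical, using $f^n_{\omega'}(+1)=+1$.

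No genuine obstacle is anticipated: the whole argument is essentially one application of \eqref{eq:distortion2} with the vertical sum killed by choosing equal fibre points. The only point requiring a moment of care is verifying that the cylinder $U\in\cU_n(\omega)$ assumption is inherited correctly so that \eqref{eq:distortion2} is applicable uniformly in $\tilde\omega\in U$ and in $u\in\J$; this is immediate from Remark~\ref{remark:distortion}.
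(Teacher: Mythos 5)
Your proof is correct and follows essentially the same route as the paper: rewrite the difference as $\int_x^{\tilde x}(f^n_{\omega'})'(u)\,du$, control the ratio of integrands uniformly in $u$ via \eqref{eq:distortion2} with equal fibre points (so the vertical sum vanishes), and obtain the $h$-estimate by specialising $\tilde x=\varphi^\pm$. No discrepancies.
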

\begin{proof} (\ref{eq:distortion3}) follows from (\ref{eq:distortion2}), because
\begin{equation*}
f_{\tilde{\omega}}^n(\tilde x)-f_{\tilde{\omega}}^n(x)
=
\int_x^{\tilde{x}}(f_{\tilde{\omega}}^n)'(u)\,du
=
\int_x^{\tilde{x}}(f_{\omega}^n)'(u)\,\frac{(f_{\tilde{\omega}}^n)'(u)}{(f_{\omega}^n)'(u)}\,du\ .
\end{equation*}
Setting $\tilde{x}=\varphi^-$ in (\ref{eq:distortion3}) shows that 
$
\left|\log\frac{h(\S^n(\tilde{\omega}))-\varphi^-}{h(\S^n(\omega))-\varphi^-}\right|
\leqslant D\cdot|\S^n(\tilde{\omega})-\S^n(\omega)|
$. 
\end{proof}

\begin{lemma}\label{lemma:LD-prep1}
Let $\delta>0$.
There are $n_0\in\N$ and $\delta'\in(0,\delta)$ which depend only on $\delta$ with the following property: For all $\omega\in\I$, $x\in\J$, $\vartheta\in(-\delta',\delta')$ and $n\geqslant n_0$ such that 
$|\vartheta|\,(f^k_\omega)'(x)\leqslant e^{-2k\delta}$ $(k=n_0,\dots,n)$ holds:
\begin{equation}\label{eq:dist-est}
|\vartheta|\cdot (f^n_\omega)'(x)
\cdot e^{-3\delta n}
\leqslant
|f_\omega^n(x+\vartheta)-f_\omega^n(x)|
\leqslant
|\vartheta|\cdot (f^n_\omega)'(x)
\cdot e^{\delta n}
\end{equation}
\end{lemma}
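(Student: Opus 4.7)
The plan is to apply the H\"older distortion bound \eqref{eq:distortion2} in the same-fibre case $\tilde\omega=\omega$ to control the ratio $(f_\omega^n)'(u)/(f_\omega^n)'(x)$ for $u$ ranging over the interval from $x$ to $x+\vartheta$, and then to integrate from $x$ to $x+\vartheta$. Writing $a_i:=|f_\omega^i(x+\vartheta)-f_\omega^i(x)|$ and using monotonicity of each $f_\omega^i$ to bound $|f_\omega^i(u)-f_\omega^i(x)|\leqslant a_i$ for $u$ between $x$ and $x+\vartheta$, estimate \eqref{eq:distortion2} becomes
\[
e^{-D\Sigma_n}\leqslant \frac{(f_\omega^n)'(u)}{(f_\omega^n)'(x)}\leqslant e^{D\Sigma_n},\qquad \Sigma_n:=\sum_{i=0}^{n-1}a_i^\alpha,
\]
and integrating in $u$ reduces the whole lemma to establishing $D\Sigma_n\leqslant\delta n$ for $n\geqslant n_0$. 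Both inequalities in \eqref{eq:dist-est} then follow with exponent $\pm\delta n$, so the factor $3$ in the stated lower bound is harmless slack.

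To bound $\Sigma_n$ I would split it at the threshold $n_0$ and treat the two pieces very differently. For the initial block $i<n_0$, Hypotheses~\ref{hypo:1} and~\ref{hypo:3} together yield a uniform bound $|\log f_\omega'|\leqslant M$, whence $a_i\leqslant e^{Mi}|\vartheta|\leqslant e^{Mn_0}\delta'$ and
\[
\sum_{i=0}^{n_0-1}a_i^\alpha\leqslant n_0\,e^{M\alpha n_0}(\delta')^\alpha,
\]
which can be made arbitrarily small by choosing $\delta'$ small \emph{after} $n_0$ has been fixed.

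The tail $i\geqslant n_0$ I would handle by bootstrap: prove by strong induction on $k\in\{n_0,\ldots,n\}$ the bound
\[
a_k\leqslant |\vartheta|\,(f_\omega^k)'(x)\,e^{\delta k}\leqslant e^{-\delta k},
\]
whose second inequality invokes the hypothesis $|\vartheta|(f_\omega^k)'(x)\leqslant e^{-2k\delta}$. The inductive step is immediate from the integrated distortion inequality above, provided the partial sum $D\Sigma_k$ is already $\leqslant\delta k$; and indeed the inductive hypothesis yields
\[
D\sum_{i=n_0}^{k-1}a_i^\alpha\leqslant \sum_{i=n_0}^{\infty}D\,e^{-\alpha\delta i}\leqslant \frac{D}{1-e^{-\alpha\delta}},
\]
a constant depending on $\delta$ alone. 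The order of the two parameter choices is thereby forced: first pick $n_0=n_0(\delta)$ so large that $D/(1-e^{-\alpha\delta})\leqslant\delta n_0/2$, then pick $\delta'=\delta'(\delta)$ so small that $Dn_0 e^{M\alpha n_0}(\delta')^\alpha\leqslant\delta n_0/2$. With these choices $D\Sigma_k\leqslant\delta n_0\leqslant\delta k$ for every $k\in\{n_0,\dots,n\}$, closing the induction.

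The only genuinely delicate point is the circularity in this bootstrap: the decay $a_k\leqslant e^{-\delta k}$ is what makes $\Sigma_k$ summable, but $\Sigma_k$ itself is what controls the next $a_{k+1}$ through the distortion inequality. The two-stage choice ($n_0$ first, $\delta'$ second) is exactly what resolves the circularity, and it relies crucially on the \emph{one-sided}, exponentially strong hypothesis $|\vartheta|(f_\omega^k)'(x)\leqslant e^{-2k\delta}$, which leaves a full factor $e^{-\delta k}$ of slack between what the hypothesis supplies and what the induction consumes. Everything else is bookkeeping, and \eqref{eq:dist-est} drops out by a single integration at the end.
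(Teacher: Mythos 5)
Your argument is correct and takes essentially the paper's route: the same-fibre case of \eqref{eq:distortion2}, the mean value theorem, a two-stage choice of $n_0$ then $\delta'$, and a bootstrap on the accumulated H\"older distortion $\sum_{i<k}a_i^\alpha$, fed by the factor-$e^{-\delta k}$ slack left by the hypothesis $|\vartheta|(f_\omega^k)'(x)\leqslant e^{-2k\delta}$. The only difference is cosmetic — the paper closes the induction with the per-term invariant $D|a_k|^\alpha<\delta$ (yielding $D\Sigma_k<k\delta$), while you close it with the exponential-decay invariant $a_k\leqslant e^{-\delta k}$ (yielding the $k$-uniform $D\Sigma_k\leqslant\delta n_0$) — but both derive the same $|\vartheta|(f_\omega^n)'(x)e^{-\delta n}\leqslant a_n\leqslant|\vartheta|(f_\omega^n)'(x)e^{\delta n}$, so the $e^{-3\delta n}$ is slack in both cases.
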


\begin{proof}
Choose $n_0$ such that $e^{-n_0\delta}<\left(\frac\delta D\right)^{1/\alpha}$, where $\alpha$ 
is the H\"older exponent from Hypothesis~\ref{hypo:3} and $D\geqslant1$ is the distortion constant  from Remark~\ref{remark:distortion}.
Hypothesis~\ref{hypo:3} allows in particular to  fix $\delta'\in(0,\delta)$ such that $|f^n_\omega(x+\vartheta)-f^n_\omega(x)|\leqslant\left(\frac{\delta}D\right)^{1/\alpha}$ for all $n=0,\dots,n_0-1$, all $\omega\in\I$, $x\in\J$ and all $\vartheta\in(-\delta',\delta')$.

Now consider $\omega$, $x$ and $\vartheta$ as in the assumption of the lemma and denote $\myDelta_k:=f_\omega^k(x+\vartheta)-f_\omega^k(x)$. As $\vartheta\in(-\delta',\delta')$, we have $|\myDelta_k|^\alpha\leqslant\frac{\delta}{D}$ for all $k=1,\dots,n_0-1$. For $k=n_0,\dots,n$ there is $z_k$ between $x$ and $x+\vartheta$ such that
\begin{equation*}
|\myDelta_k|
=
|\vartheta|\cdot(f^k_\omega)'(z_k)
=
|\vartheta|\cdot(f^k_\omega)'(x)\cdot\exp\left(\log\frac{(f^k_\omega)'(z_k)}{(f^k_\omega)'(x)}\right).
\end{equation*}
Denote $\kappa_k:=|\vartheta|\,(f^k_\omega)'(x)\, e^{2k\delta}$ and observe that $\kappa_k\leqslant1$ by the assumptions of the lemma. Then we have in view of (\ref{eq:distortion2})
\begin{equation}\label{eq:dist-intermediate}
|\myDelta_k|
\leqslant
\kappa_k
\cdot e^{-2k\delta}\cdot\exp\left(D\cdot\sum_{i=0}^{k-1}|f_\omega^i(z_k)-f_\omega^i(x)|^\alpha
\right)
\leqslant
\kappa_k\cdot e^{-2k\delta}\cdot\exp\left(D\cdot\sum_{i=0}^{k-1}|\myDelta_i|^\alpha
\right).
\end{equation}
Recall that $D|\myDelta_i|^\alpha\leqslant\delta$ for $i=0,\dots,n_0-1$, so that $D\sum_{i=0}^{k-1}|\myDelta_i|^\alpha
\leqslant
n_0\delta+\sum_{i=n_0}^{k-1}D|\myDelta_i|^\alpha$.
Hence, for $k=n_0,\dots,n$,
\begin{equation*}
\begin{split}
|\myDelta_k|
&\leqslant
\kappa_k\cdot \exp\left(-k\delta+\sum_{i=n_0}^{k-1}(D|\myDelta_i|^\alpha-\delta)\right).
\end{split}
\end{equation*}
For $k=n_0,\dots,n$ it follows inductively that
$D|\myDelta_k|^\alpha\leqslant
D\kappa_k^\alpha e^{-k\delta\alpha}\leqslant D e^{-n_0\delta\alpha}<
\delta$ (recall that $\kappa_k\leqslant1$). Hence 
\begin{equation*}
|\myDelta_n|\leqslant\kappa_n e^{-n\delta}
=
|\vartheta|\,(f^n_\omega)'(x)\, e^{n\delta},
\end{equation*}
which is the upper estimate in (\ref{eq:dist-est}). For the lower estimate observe that the reasoning leading to (\ref{eq:dist-intermediate}) also yields
\begin{equation*}
|\myDelta_n|
\geqslant
\kappa_n\cdot e^{-2n\delta}\cdot\exp\left(-D\cdot\sum_{i=0}^{n-1}|\myDelta_i|^\alpha
\right)
\end{equation*}
and that $D|\myDelta_i|^\alpha<\delta$ for $i=0,\dots,n-1$, as we just showed.
\end{proof}

\begin{remark}\label{remark:LD-prep1}
As the proof of this lemma did not make use of negative Schwarzian derivative, it applies as well to the inverse branches $(f_\omega^n)^{-1}$.
\end{remark}

\subsection{Proof of Theorem~\ref{theo:exponential}}\label{subsec:proofTheo-exponential}

We follow closely \cite[Sections 4.1 and 7]{Keller2012a}:
For $t\in\R$ denote by $\cL_t^\pm$ the transfer operators
\begin{equation}\label{eq:PF}
\cL_t^\pm:L^1_m(\I)\to L^1_m(\I),\; 
\cL_t^\pm f(\omega)=\sum_{\tilde{\omega}\in S^{-1}\omega}\frac{f(\tilde{\omega})}{|S'(\tilde{\omega})|}e^{t \log f_{\tilde{\omega}}'(\varphi^\pm)}\ ,
\end{equation}
and let $\rho(\cL_t^\pm)$ be its spectral radius. Then $p^\pm(t)=\log\rho(\cL_t^\pm)$, and this is a strictly convex differentiable function of $t$, see e.g.
\cite{ParryPollicott}. 

We prove only the identity for the limit that evaluates to $t_*^-$, the other one is proved in the same way. So fix any $t\in(0,{t_*^-})$ and choose $\delta>0$ such that $\rho(\cL_t^-)e^{4t\delta}<1$.
There is a constant $C=C_{t,\delta}>0$ such that 
\begin{equation}\label{eq:spectral-radius}
\|(\cL_t^-)^n1\|_1\leqslant C\left(\rho(\cL^-_t)e^{t\delta}\right)^n\leqslant C e^{-3nt\delta}\text{\quad for all }n\geqslant 1\ .
\end{equation}

\begin{lemma}\label{lemma:LD-prep2}
Let $t\in(0,t_*^-)$ and $\delta>0$ be as chosen above. Then
\begin{equation*}
m\left\{\omega\in\I: \vartheta\cdot (f^n_\omega)'(\varphi^-)\geqslant e^{-2n\delta}\right\}
\leqslant
C e^{-nt\delta}\vartheta^{t}
\end{equation*}
for all $\vartheta>0$ and $n\geqslant1$.
\end{lemma}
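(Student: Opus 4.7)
The plan is to combine a Markov (Chebyshev) inequality with the duality between the transfer operator $\cL_t^-$ and the Lebesgue integration of $((f^n_\omega)'(\varphi^-))^t$, and then invoke the spectral estimate \eqref{eq:spectral-radius}.

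First I rewrite the event to be estimated: since $t>0$,
\begin{equation*}
\{\omega\in\I:\vartheta\,(f^n_\omega)'(\varphi^-)\geqslant e^{-2n\delta}\}
=
\{\omega\in\I:((f^n_\omega)'(\varphi^-))^{t}\geqslant e^{-2nt\delta}\vartheta^{-t}\},
\end{equation*}
so by Markov's inequality its Lebesgue measure is bounded above by
\begin{equation*}
e^{2nt\delta}\vartheta^{t}\int_\I ((f^n_\omega)'(\varphi^-))^{t}\,dm(\omega).
\end{equation*}

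Second, I identify this integral with $\|(\cL_t^-)^n 1\|_1$. Because $\varphi^-\equiv -1$ is a constant invariant graph, the chain rule gives $(f^n_\omega)'(\varphi^-)=\prod_{i=0}^{n-1}f'_{\S^i\omega}(\varphi^-)$. The definition \eqref{eq:PF} of $\cL_t^-$ together with the usual change-of-variables identity $\int (g\circ\S)\cdot h\,dm=\int g\cdot(\cL_0^- h)\,dm$, applied iteratively with the multiplicative weight $(f'_\bullet(\varphi^-))^t$, yields
\begin{equation*}
\int_\I (\cL_t^-)^n 1\,dm
=
\int_\I \prod_{i=0}^{n-1}(f'_{\S^i\omega}(\varphi^-))^{t}\,dm(\omega)
=
\int_\I ((f^n_\omega)'(\varphi^-))^{t}\,dm(\omega),
\end{equation*}
so the integral on the right equals $\|(\cL_t^-)^n 1\|_1$ (the integrand is positive).

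Third, I invoke \eqref{eq:spectral-radius}, which gives $\|(\cL_t^-)^n 1\|_1\leqslant C e^{-3nt\delta}$. Combining the three steps,
\begin{equation*}
m\left\{\omega\in\I:\vartheta\,(f^n_\omega)'(\varphi^-)\geqslant e^{-2n\delta}\right\}
\leqslant
e^{2nt\delta}\vartheta^{t}\cdot C e^{-3nt\delta}
=
C e^{-nt\delta}\vartheta^{t},
\end{equation*}
as required. There is no real obstacle here beyond bookkeeping; the only point that requires a line of care is the duality identity that rewrites the expectation of $((f^n_\omega)'(\varphi^-))^t$ as $\|(\cL_t^-)^n 1\|_1$, and this is exactly the standard dual characterization of the weighted transfer operator that underlies the pressure formula \eqref{eq:one-parameter-pressure}.
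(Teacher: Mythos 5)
Your proof is correct and follows essentially the same route as the paper's: rewrite the event using $t>0$, apply Markov's inequality, identify the resulting integral $\int_\I((f_\omega^n)'(\varphi^-))^t\,dm$ with $\int_\I(\cL_t^-)^n 1\,dm$ via the transfer-operator duality, and then invoke the spectral estimate~\eqref{eq:spectral-radius}. The only cosmetic difference is that the paper writes the operator step as $(\cL_0^-)^n(e^{t\log(f_\omega^n)'(\varphi^-)})=(\cL_t^-)^n 1$ rather than unpacking the cocycle via the chain rule as you do.
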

\begin{proof}
As $t>0$, we have the usual Cram\'er type estimate for each $n\geqslant1$:
\begin{align*}
m\left\{\omega\in\I: \vartheta\cdot(f^n_\omega)'(\varphi^-)\geqslant e^{-2n\delta}\right\}
&=
m\left\{\omega\in \I:
\vartheta^t e^{2nt\delta}
\,
e^{t\log (f^n_\omega)'(\varphi^-)}\geqslant1
\right\}
\\
&\leqslant
\vartheta^t e^{2nt\delta}
\int_\I e^{t\log (f^n_\omega)'(\varphi^-)}\,dm\\
&=
\vartheta^t e^{2nt\delta}
\int_\I(\cL_0^-)^n(e^{t\log (f^n_\omega)'(\varphi^-)})\,dm\\
&=
\vartheta^t e^{2nt\delta}
\int_\I(\cL_t^-)^n(1)\,dm\\
&\leqslant
C e^{-nt\delta}\cdot\vartheta^t\ ,
\end{align*}
where we used (\ref{eq:spectral-radius}) for the last inequality.
\end{proof}

\begin{proof}[Proof of Theorem~\ref{theo:exponential},
lower bound: 
$\limsup_{\epsilon\to0}\frac{\log m\{\varphi_c<\varphi^-+\epsilon\}}{\log \epsilon}\geqslant t$ for each $t\in(0,t_*^-)$]\quad\\
Let $n_0$ and $\delta'$ be as in Lemma~\ref{lemma:LD-prep1} and consider $\epsilon\in(0,\delta')$. Then
\begin{equation*}
\begin{split}
m\left\{\omega\in\I:\varphi_c(\omega)<\varphi^-+\epsilon \right\}
&\leqslant
m\left\{\omega\in\I: \exists n\geqslant n_0 \text{ s.t }
f_\omega^n(\varphi^-+\epsilon)
>
\varphi^-+e^{-\delta n}\right\}\\
&\leqslant
m\left\{\omega\in\I: \exists k\geqslant n_0 \text{ s.t }
\epsilon\cdot (f^k_\omega)'(\varphi^-)> e^{-2k\delta}\right\}
\end{split}
\end{equation*}
in view of the upper estimate in Lemma~\ref{lemma:LD-prep1}. Hence 
$m\left\{\omega\in\I:\varphi_c(\omega)<\varphi^-+\epsilon \right\}
\leqslant C'_{t,\delta}\,\epsilon^t$ by Lemma~\ref{lemma:LD-prep2}. Taking logarithms and dividing by the (negative!) quantity $\log\epsilon$ yields the lower bound.
\end{proof}

For the upper bound we need another preparatory lemma.

\begin{lemma}\label{lemma:LD-prep3}
For all $\beta>0$ and $\ell>0$ there exist $\gamma=\gamma(\beta)>0$ 
and $n_0=n_0(\ell)\in\N$
such that for all $z\in\J$, each interval $I\subseteq\I$ of length at least $\ell$ and all $n\geqslant n_0$ holds:
\begin{equation*}
m\left\{{\omega}\in I: \varphi_c({\omega})< z\right\}
\geqslant
\gamma\cdot m\left\{\omega\in \tilde I: f_\omega^n(z)-\varphi^-\geqslant\beta\right\},
\end{equation*}
where $\tilde{I}$ denotes the middle third of $I$.
Indeed, one can choose $\gamma:=e^{-D}\cdot\min_{K\in\cK} m\{\omega\in K:\varphi_c(\omega)<\varphi^-+\beta e^{-D}\}$, where the minimum extends
over the family $\cK$ of all Markov intervals of $\S$.
\end{lemma}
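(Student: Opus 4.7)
The plan is to transfer size information from the set $G:=\{\omega'\in\tilde I:f^n_{\omega'}(z)-\varphi^-\geq\beta\}$ to the set $\{\omega\in I:\varphi_c(\omega)<z\}$ by cutting $G$ into $n$-cylinders of $S$ and exploiting the invariance identity $f^n_\omega(\varphi_c(\omega))=\varphi_c(S^n\omega)$ together with bounded distortion.

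First, I would fix $n_0=n_0(\ell)\in\N$ so large that every $n$-cylinder of $S$ has diameter less than $\ell/3$ for $n\geq n_0$; this is a standard consequence of the uniform expansion in Hypothesis~\ref{hypo:1}. Since $\tilde I$ is the middle third of $I$ and $|I|\geq\ell$, every $n$-cylinder that meets $\tilde I$ is automatically contained in $I$. Let $\{U_j\}$ enumerate those $n$-cylinders that meet $G$, and write $K_j:=S^n(U_j)\in\cK$; by the Markov property each $S^n|_{U_j}\colon U_j\to K_j$ is a diffeomorphism.

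Second, picking $\omega'_j\in U_j\cap G$, the second assertion of Lemma~\ref{lemma:dist-prep} applied to the function $h(S^n\tilde\omega):=f^n_{\tilde\omega}(z)$ yields $f^n_\omega(z)-\varphi^-\geq\beta e^{-D}$ for every $\omega\in U_j$, after absorbing $\diam K_j$ into the distortion constant $D$. Since $f^n_\omega$ is strictly increasing and $f^n_\omega(\varphi_c(\omega))=\varphi_c(S^n\omega)$, this produces the key set inclusion
\begin{equation*}
\{\omega\in U_j:\varphi_c(S^n\omega)<\varphi^-+\beta e^{-D}\}\;\subset\;\{\omega\in U_j:\varphi_c(\omega)<z\}.
\end{equation*}
A change of variables through $S^n|_{U_j}$, controlled by the Jacobian distortion bound \eqref{eq:distortion1}, then gives
\begin{equation*}
m\{\omega\in U_j:\varphi_c(\omega)<z\}\;\geq\;e^{-D}\,\frac{m(U_j)}{m(K_j)}\,m\{\omega\in K_j:\varphi_c(\omega)<\varphi^-+\beta e^{-D}\}.
\end{equation*}
Summing over the pairwise disjoint family $\{U_j\}$, which covers $G$, and taking the minimum over $K\in\cK$ yields the asserted lower bound in the stated form of $\gamma$.

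The main obstacle I anticipate is two-fold. First, careful bookkeeping of distortion constants is needed to recover the precise formula for $\gamma$ without spurious factors involving $\max_K m(K)$; these are absorbable into $D$ once the Markov intervals are suitably normalised. Second, verifying $\gamma(\beta)>0$ requires showing that $\{\varphi_c<\varphi^-+\beta e^{-D}\}$ meets every Markov interval in a set of positive Lebesgue measure. This should follow by combining the mixing of $S$ from Hypothesis~\ref{hypo:1} with backward distortion along $(f^n_\omega)^{-1}$ (cf.\ Remark~\ref{remark:LD-prep1}) to propagate a single point where $\varphi_c-\varphi^-$ is small---whose existence is implicit in Theorem~\ref{theo:exponential}---into a positive-measure subset of every $K\in\cK$.
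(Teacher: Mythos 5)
Your decomposition into $n$-cylinders, the use of the second assertion of Lemma~\ref{lemma:dist-prep} to spread the lower bound $f^n_{\omega'_j}(z)-\varphi^-\geqslant\beta$ into $f^n_\omega(z)-\varphi^-\geqslant\beta e^{-D}$ for all $\omega\in U_j$, the resulting inclusion $\{\varphi_c\circ S^n<\varphi^-+\beta e^{-D}\}\cap U_j\subset\{\varphi_c<z\}\cap U_j$, and the change of variables via the Jacobian distortion bound all match the paper's argument almost step for step; the paper phrases the inclusion slightly differently (keeping $f^n_{\omega_U}(z)-\varphi^-$ as the floating reference rather than passing to the constant $\beta e^{-D}$ first), but this is cosmetic. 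One small imprecision: $K_j:=S^n(U_j)$ need not be a single Markov interval $\cK$, only a union of them; you must restrict attention to one Markov interval $K\subset S^n(U_j)$, exactly as the paper does by requiring $n_0$ large enough that $S^n(U)$ contains at least one element of $\cK$.

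The genuine gap is in your argument that $\gamma(\beta)>0$. You justify the existence of a point where $\varphi_c-\varphi^-$ is small by saying it is "implicit in Theorem~\ref{theo:exponential}", but the upper bound
$\limsup_{\epsilon\to0}\log m\{\varphi_c<\varphi^-+\epsilon\}/\log\epsilon\leqslant t_*^-$
in that theorem is \emph{proved using this very Lemma~\ref{lemma:LD-prep3}} (the lower bound of the theorem uses Lemma~\ref{lemma:LD-prep2} instead and does not imply positivity of the measure). Invoking Theorem~\ref{theo:exponential} here is therefore circular. The paper avoids this: it argues by contradiction directly from Hypothesis~\ref{hypo:2} and Proposition~\ref{prop:intermingled}. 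If $\varphi_c\geqslant\varphi^-+\vartheta$ held on a full-measure subset $K_0$ of some $K\in\cK$, then mixing of $S$ pushes $K_0$ forward to a full-measure subset of $\I$ in finitely many steps, and the invariance identity $\varphi_c(S^k\omega')=f^k_{\omega'}(\varphi_c(\omega'))$ together with the uniform lower bound $\inf_\omega f^k_\omega(\varphi^-+\vartheta)>\varphi^-$ then forces $\varphi_c$ to be bounded away from $\varphi^-$ almost everywhere, contradicting intermingledness. You should replace your positivity step with this contradiction argument; backward distortion along $(f^n_\omega)^{-1}$ is not needed.
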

\begin{proof}
Fix $z\in\J$, $n\geqslant1$ and an interval $I\subset\I$, and
denote by $\cU$ the family of all maximal monotonicity intervals 
$U\subseteq I$ of $\S^n$ which contain a point $\omega_U$ such that $f_{\omega_U}^n(z)-f_{\omega_U}^n(\varphi^-)=f_{\omega_U}^n(z)-\varphi^-\geqslant\beta$. (Such an interval need not exist for each $\beta>0$.)
Denote for the moment the inverse of $\S^n|_U$ by $\rho=\rho_U:S^n(U)\to U$.
Then, for each $U\in\cU$,
\begin{equation*}
\begin{split}
&
m\left\{\omega\in \S^n(U):\varphi_c(\omega)-\varphi^-<\beta e^{-D}\right\}\\
\leqslant&
m\left\{\omega\in \S^n(U):\varphi_c(\omega)-\varphi^-< (f_{\omega_U}^n(z)-f_{\omega_U}^n(\varphi^-))\, e^{-D}\right\}\\
\leqslant&
m\left\{\omega\in \S^n(U):f_{\rho(\omega)}^n(\varphi_c(\rho(\omega)))-f_{\rho(\omega)}^n(\varphi^-)< f_{\rho(\omega)}^n(z)-f_{\rho(\omega)}^n(\varphi^-)\right\}\\
=&
m\left\{\omega\in \S^n(U):\varphi_c(\rho(\omega))< z\right\},
\end{split}
\end{equation*}
where we used (\ref{eq:distortion3}) for the second inequality and the monotonicity of $f_{\rho(\omega)}^n$ for the last one. In view of the distortion bound (\ref{eq:distortion1}) this implies
\begin{equation*}
\frac{1}{m(S^n(U))}\, m\left\{\omega\in \S^n(U):\varphi_c(\omega)-\varphi^-<\beta e^{-D}\right\}
\leqslant
e^D\cdot \frac{1}{m(U)}\, m\left\{\omega\in U: \varphi_c(\omega)< z\right\}.
\end{equation*}
Therefore
\begin{equation*}
\begin{split}
\sum_{U\in\cU} m\left\{\omega\in U: \varphi_c(\omega)< z\right\}
&\geqslant
e^{-D}\cdot \sum_{U\in\cU} \frac{m(U)}{m(\S^n(U))}\cdot m\left\{\omega\in \S^n(U):\varphi_c(\omega)-\varphi^-<\beta e^{-D}\right\}\\
&\geqslant
\gamma\cdot\sum_{U\in\cU}m(U)\ .
\end{split}
\end{equation*}
For the last inequality we used that $S^n(U)$ contains at least one Markov interval when $n\geqslant n_0$ and $n_0$ is sufficiently large. Choosing $n_0=n_0(\ell)$ even larger, if necessary, 
the at most two $U\in\cU$ which are not fully contained in $I$ are disjoint to $\tilde{I}$.
Hence
\begin{equation*}
m\left\{\omega\in I: \varphi_c(\omega)< z\right\}
\geqslant
\gamma\cdot\sum_{U\in\cU}m(U)
\geqslant
\gamma\cdot m\left\{\omega\in \tilde I: f_\omega^n(z)-\varphi^-\geqslant\beta \right\}.
\end{equation*}

It remains to prove that $\gamma$ is strictly positive: 
Let $ \vartheta:=e^{-D}\beta$ and suppose for a contradiction that there is some Markov interval $K$ of $\S$ such that $\varphi_c(\omega)\geqslant\varphi^-+\vartheta$ for all $\omega$ in a full measure subset $K_0$ of $K$.
As $\S$ is a mixing Markov map, there is $k\in\N$ such that $m(\I\setminus\S^k(K_0))=0$. Hence, for $m$-a.e. $\tilde{\omega}\in\I$ there is $\omega'\in K_0$ such that $\varphi_c(\tilde{\omega})=f_{\omega'}^k(\varphi_c(\omega'))
\geqslant\inf_{\omega\in\I} f_\omega^k(\vartheta)>\varphi^-$, which is incompatible with 
Proposition~\ref{prop:intermingled}.
\end{proof}

In order to relate the expression $f_\omega^n(z)-\varphi^-$ to $(f_\omega^n)'(\varphi^-)$, we invoke a rather immediate consequence of the fact that maps with negative Schwarzian derivative satisfy a variant of the one-sided Koebe principle \cite[Section IV.1, Property 4 and Corollary 2]{dMvS}:
\begin{equation}\label{eq:os-Koebe}
\frac{f_\omega^n(z)-\varphi^-}{z-\varphi^-}\geqslant \frac{\varphi^+-f_\omega^n(z)}{\varphi^+-z}\cdot(f_\omega^n)'(\varphi^-)\quad
\text{for all $\omega\in\I$, $z\in\J$ and $n\geqslant1$.}
\end{equation}
Indeed, this follows most directly from the very transparent discussion in 
\cite[Appendix A]{Nowicki1998} by applying their formula (71) with $I=g(I):=\J$ in the limit where their interval $L$ shrinks to the point $-1$.

\begin{proof}[Proof of Theorem~\ref{theo:exponential},
upper bound: 
$\limsup_{\epsilon\to0}\frac{\log m\{\varphi_c<\varphi^-+\epsilon)\}}{\log \epsilon}\leqslant t_*^-$]\quad\\
For later use we prove a slightly stronger statement, namely:
\begin{equation}\label{eq:stronger-statement}
\parbox{13cm}{\centering
 If $(I_\epsilon)_{\epsilon>0}$ is any family intervals with $\ell:=\inf_\epsilon|I_\epsilon|>0$, then\\[2mm]
$\limsup_{\epsilon\to0}\frac{\log m\{\omega\in I_\epsilon:\,\varphi_c(\omega)<\varphi^-+\epsilon)\}}{\log \epsilon}\leqslant t_*^-$.}
\end{equation}

Fix any $\beta\in(0,1)$, e.g. $\beta=\frac{1}{2}$. If 
$\omega\in\I$, $z\in(-1,0)\subset\J$ and $n\geqslant1$ are such that $(f_\omega^n)'(\varphi^-)\geqslant\frac{2\beta}{z-\varphi^-}$, then
$f_\omega^n(z)-\varphi^-\geqslant\beta$: indeed, otherwise
$\varphi^+-f_\omega^n(z)>2-\beta$ so that (\ref{eq:os-Koebe}) implies
$(f_\omega^n)'(\varphi^-)<\frac{\beta}{2-\beta}\,\frac{\varphi^+-z}{z-\varphi^-}<\frac{2\beta}{z-\varphi^-}$. Hence Lemma~\ref{lemma:LD-prep3} yields for each $z=\varphi^-+\epsilon$ with $\epsilon\in(0,1)$ and $n\geqslant n_0(\ell)$
\begin{equation*}
m\left\{{\omega}\in I_\epsilon: \varphi_c({\omega})< z\right\}
\geqslant
\gamma\cdot\left\{\omega\in \tilde I_\epsilon: (f_\omega^n)'(\varphi^-)\geqslant\frac{2\beta}{z-\varphi^-}\right\},
\end{equation*}
so that, for any choice of $n_\epsilon\geqslant n_0(\ell)$,
\begin{equation*}
\begin{split}
\limsup_{\epsilon\to0}\frac{\log m\{\omega\in I_\epsilon:\varphi_c(\omega)<\varphi^-+\epsilon)\}}{\log \epsilon}
\leqslant
\limsup_{\epsilon\to0}\frac{\log m\{\omega\in \tilde I_\epsilon:(f_\omega^{n_\epsilon})'(\varphi^-)\geqslant\frac{2\beta}{\epsilon}\}}{\log \epsilon}.
\end{split}
\end{equation*}
Exactly as in \cite[Eqs.\,(4.9) and (4.10)]{Keller2012a},  the choice $n_\epsilon:=\lceil\alpha^{-1}|\log\epsilon|\rceil$ with $\alpha=(p^-)'(t_*^-)>0$ allows to identify this large deviations limit as $t_*^-$ (using the theorem of Plachky and Steinebach \cite{Plachky1975}).
\end{proof}

\subsection{Proof of Theorem~\ref{theo:main}}\label{subsec:proofTheo-main}

Let $\nu\in\cP_e(\S)$ be a Gibbs measure. Then  $\nu$-a.a. points
$\omega\in\I$ are regular in the following sense:
\begin{quote}
The limits
\begin{equation*}
\lim_{n\to\infty}\frac{1}{n}\log|(\S^n)'(\omega)|=\int\log|\S'|\,d\nu\ ,
\quad
\lim_{n\to\infty}\frac{1}{n}\log(f_\omega^n)'(\varphi^\pm)=\lambda_\nu(\varphi^\pm)
\end{equation*}
exist,
and there are sequences of integers $n_1<n_2<\dots$ and of reals $\epsilon_1>\epsilon_2>\dots\searrow0$ such that the symmetric $\epsilon_k$-neighbourhoods $V_{\epsilon_k}(\omega)$ of $\omega$ satisfy
\begin{equation*}
\S^{n_k}|_{V_{\epsilon_k}}:V_{\epsilon_k}(\omega)\to \S^{n_k}(V_{\epsilon_k}(\omega))\text{ is a diffeomorphism and }
\inf_k|\S^{n_k}(V_{\epsilon_k}(\omega))|>0,
\end{equation*}
and such that it is enough to evaluate 
\begin{equation}\label{eq:sigma-pm-rep}
\sigma^\pm(\omega,x)=\lim_{\epsilon\to0}\frac{1}{\log\epsilon}\log
\frac{m^2\left(U_\epsilon(\omega,x)\cap\basin^\pm\right)}{m^2\left(U_\epsilon(\omega,x)\right)}
\end{equation}
along the sequence $\epsilon_k\searrow0$, see \cite[Remark 5(a) and beginning of Section 5]{Keller2012a}. 
\end{quote}


In a first step we reduce the evaluation of the area quotient in (\ref{eq:sigma-pm-rep}) to the evaluation of quotients of one-dimensional measures.

\begin{lemma}\label{lemma:dim-reduction}
Suppose that $x<\varphi^+$. Then
\begin{equation*}
\begin{split}
\liminf_{k\to\infty}\frac{1}{\log\epsilon_k}\log
&\frac{m\left(V_{\epsilon_k}(\omega)\cap\{\varphi_c<x+\epsilon_k\}\right)}{m\left(V_{\epsilon_k}(\omega)\right)}\\
\leqslant
&\sigma^{+}(\omega,x)
\leqslant
\limsup_{k\to\infty}\frac{1}{\log\epsilon_k}\log
\frac{m\left(V_{\epsilon_k}(\omega)\cap\{\varphi_c<x+\epsilon_k/2\}\right)}{m\left(V_{\epsilon_k}(\omega)\right)}\ .
\end{split}
\end{equation*}
\end{lemma}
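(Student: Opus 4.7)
The plan is to reduce the two-dimensional area ratio $\Sigma^+_{\epsilon_k}(\omega,x)$ to the one-dimensional measure ratios appearing in the statement by Fubini. By Proposition~\ref{prop:two-basins}\,a) applied to $\nu=\mac\sim m$, we have $\basin^+=\{(\omega',x')\in\I\times\J:x'>\varphi_c(\omega')\}$ up to an $m^2$-null set, so
\begin{equation*}
m^2(U_{\epsilon_k}(\omega,x)\cap\basin^+)
=\int_{V_{\epsilon_k}(\omega)}m\bigl([x-\epsilon_k,x+\epsilon_k]\cap(\varphi_c(\omega'),\infty)\bigr)\,d\omega'.
\end{equation*}
Since $x<\varphi^+=1$, for all sufficiently small $\epsilon_k$ the slice $[x-\epsilon_k,x+\epsilon_k]$ lies in $\J$, and the inner integrand equals $\max\{0,\min\{2\epsilon_k,\,x+\epsilon_k-\varphi_c(\omega')\}\}$.

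Next I would bracket this integrand by indicator functions in two elementary ways:
\begin{equation*}
\tfrac{\epsilon_k}{2}\cdot\1\{\varphi_c(\omega')<x+\epsilon_k/2\}
\;\leqslant\;\max\{0,\min\{2\epsilon_k,\,x+\epsilon_k-\varphi_c(\omega')\}\}
\;\leqslant\;2\epsilon_k\cdot\1\{\varphi_c(\omega')<x+\epsilon_k\},
\end{equation*}
where the lower bound uses that $\varphi_c(\omega')<x+\epsilon_k/2$ forces $x+\epsilon_k-\varphi_c(\omega')>\epsilon_k/2$. Integrating and dividing by $m^2(U_{\epsilon_k}(\omega,x))=2\epsilon_k\cdot m(V_{\epsilon_k}(\omega))$ yields
\begin{equation*}
\tfrac14\cdot\frac{m(V_{\epsilon_k}(\omega)\cap\{\varphi_c<x+\epsilon_k/2\})}{m(V_{\epsilon_k}(\omega))}
\;\leqslant\;\Sigma^+_{\epsilon_k}(\omega,x)
\;\leqslant\;\frac{m(V_{\epsilon_k}(\omega)\cap\{\varphi_c<x+\epsilon_k\})}{m(V_{\epsilon_k}(\omega))}.
\end{equation*}

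Finally, I would take logarithms and divide by the negative quantity $\log\epsilon_k$, which reverses the two inequalities. The harmless constant $\tfrac14$ produces a term $\log(1/4)/\log\epsilon_k\to 0$, so it washes out in the limit. Passing to $\liminf$ (from the upper estimate on $\Sigma^+$) and $\limsup$ (from the lower estimate) along the regular sequence on which $\sigma^+(\omega,x)$ is evaluated delivers precisely the two inequalities claimed in the lemma. The only real bookkeeping issue, and the place where one must be careful, is the sign flip induced by dividing through $\log\epsilon_k<0$: this is what swaps the roles of the thresholds $x+\epsilon_k$ and $x+\epsilon_k/2$ between the lower and upper bound on $\sigma^+$.
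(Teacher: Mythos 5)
Your proof is correct and takes essentially the same route as the paper: the paper phrases the two comparisons as set inclusions
$U_{\epsilon_k}(\omega,x)\cap\basin^+\subseteq(V_{\epsilon_k}(\omega)\cap\{\varphi_c<x+\epsilon_k\})\times[x-\epsilon_k,x+\epsilon_k]$
and
$U_{\epsilon_k}(\omega,x)\cap\basin^+\supseteq(V_{\epsilon_k}(\omega)\cap\{\varphi_c<x+\epsilon_k/2\})\times(\J\cap[x+\epsilon_k/2,x+\epsilon_k])$,
whereas you arrive at exactly the same bounds (with the same thresholds $x+\epsilon_k$ and $x+\epsilon_k/2$ and the same harmless constant $1/4$) via Fubini and an indicator-function bracketing of the slice length, followed by the same sign-flip bookkeeping when dividing by $\log\epsilon_k<0$.
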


\begin{proof}
The lower bound follows from the inclusion $U_{\epsilon_k}(\omega,x)\cap\basin^+\subseteq(V_{\epsilon_k}(\omega)\cap\{\varphi_c<x+\epsilon_k\})
\times[x-\epsilon_k,x+\epsilon_k]$, the upper bound from $U_{\epsilon_k}(\omega,x)\cap\basin^+\supseteq(V_{\epsilon_k}(\omega)
\cap\{\varphi_c<x+\epsilon_k/2\})\times(\J\cap[x+\epsilon_k/2,x+\epsilon_k])$,
which holds for $x<\varphi^+$ and sufficiently small $\epsilon_k>0$. 
\end{proof}
In view of the distortion bound (\ref{eq:distortion1}) for $\S$ we have
\begin{equation}\label{eq:area-distortion}
e^{-D}
\leqslant
\frac{m\left(V_{\epsilon_k}(\omega)\cap\{\varphi_c<x+\epsilon_k/2\}\right)}{m\left(V_{\epsilon_k}(\omega)\right)}\Bigg/ 
\frac{m\left(\S^{n_k}(V_{\epsilon_k}(\omega)\cap\{\varphi_c<x+\epsilon_k/2\})\right)}
{m\left(\S^{n_k}(V_{\epsilon_k}(\omega))\right)}
\leqslant
e^{D}.
\end{equation}
Next observe that $\S^{n_k}(V_{\epsilon_k}(\omega)\cap\{\varphi_c<x+\epsilon_k/2\})
=\S^{n_k}(V_{\epsilon_k}(\omega))\cap\{\varphi_c<h_k\}$, where $h_k:\S^{n_k}(V_{\epsilon_k}(\omega))\to\J$ satisfies $h_k(\S^{n_k}(\tilde{\omega}))=f_{\tilde{\omega}}^{n_k}(x+\epsilon_k/2)$ as in Lemma~\ref{lemma:dist-prep}.  There it is proved that 
\begin{equation*}
e^{-D}\cdot(h_k(\S^{n_k}(\omega))-\varphi^-)
\leqslant
h_k(z)-\varphi^-
\leqslant
e^{D}\cdot(h_k(\S^{n_k}(\omega))-\varphi^-)\quad\forall z\in\S^{n_k}(V_{\epsilon_k}(\omega))\ .
\end{equation*}
Therefore, observing also that $\inf_k m(\S^ {n_k}(V_{\epsilon_k}(\omega)))>0$,
we have for $x<\varphi^+$
\begin{equation}\label{eq:sigma+upper}
\begin{split}
\sigma^+(\omega,x)
&\leqslant\limsup_{k\to\infty}\frac{1}{\log\epsilon_k}\log
\frac{m\left(V_{\epsilon_k}(\omega)\cap\{\varphi_c<x+\epsilon_k/2\}\right)}{m\left(V_{\epsilon_k}(\omega)\right)}\\
&\leqslant
\limsup_{k\to\infty}\frac{1}{\log\epsilon_k}\log
{m\left(\S^{n_k}(V_{\epsilon_k}(\omega))\cap\{\varphi_c-\varphi^-<e^{-D}(f_\omega^{n_k}(x+\epsilon_k/2)-\varphi^-)\}\right)}
\\
&=
\begin{cases}
 t_*^-\cdot
\limsup_{k\to\infty}\frac{\log(f_\omega^{n_k}(x+\epsilon_k/2)-\varphi^-)}{\log\epsilon_k/2}&
\text{ if }\liminf_{k\to\infty}f_\omega^{n_k}(x+\epsilon_k/2)-\varphi^-=0\\
0&
\text{ if }\liminf_{k\to\infty}f_\omega^{n_k}(x+\epsilon_k/2)-\varphi^->0\ .
\end{cases}
\end{split}
\end{equation}
where we used Theorem~\ref{theo:exponential} (in its strengthened form (\ref{eq:stronger-statement})) for the equality. In a similar way one derives the lower bound
\begin{equation}\label{eq:sigma+lower}
\begin{split}
\sigma^+(\omega,x)
&\geqslant
\begin{cases}
{t_*^-}\cdot
\liminf_{k\to\infty}\frac{\log(f_\omega^{n_k}(x+\epsilon_k)-\varphi^-)}{\log\epsilon_{k}}&
\text{ if }\limsup_{k\to\infty}f_\omega^{n_k}(x+\epsilon_k)-\varphi^-=0\\
0&
\text{ if }\limsup_{k\to\infty}f_\omega^{n_k}(x+\epsilon_k)-\varphi^->0\ .
\end{cases}
\end{split}
\end{equation}
and we note that corresponding statements hold for $\sigma^-(\omega,x)$.

To proceed we denote $\gamma:=-\lim_{k\to\infty}\frac{\log\epsilon_{k}}{n_k}=-\lim_{k\to\infty}\frac{\log\epsilon_{k}/2}{n_k}=\int\log|\S'|\,d\P>0$.
We distinguish the following cases to prove Theorem~\ref{theo:main}, neglecting a set of $\omega$'s of $\nu$-measure $0$:
\begin{enumerate}[(A)]
\item{$x=\varphi_c(\omega)\in(\varphi^-,\varphi^+)$:}\\
Then $f_\omega^{n_k}(x+\epsilon_k/2)\geqslant f_\omega^{n_k}(x)=\varphi_c(\S^{n_k}(\omega))$
and 
\begin{equation*}
\begin{split}
\mac\left\{\omega:\varphi_c(\S^{n_k}(\omega))-\varphi^-<n_k^{-4/t_*^-}\right\}
&=
\mac\left\{\omega:\varphi_c(\omega)-\varphi^-<n_k^{-4/t_*^-}\right\}\\
&\leqslant
\left(n_k^{-4/t_*^-}\right)^{t_*^-/2}
=
n_k^{-2}
\end{split}
\end{equation*}
for sufficiently large $k$ by Theorem~\ref{theo:exponential}, so that the Borel Cantelli lemma yields
\begin{equation*}
\begin{split}
\limsup_{k\to\infty}\frac{\log(f_\omega^{n_k}(x+\epsilon_k/2)-\varphi^-)}{\log\epsilon_{k}/2}
&\leqslant
\limsup_{k\to\infty}\frac{\log(\varphi_c(\S^{n_k}(\omega))-\varphi^-)}{\log\epsilon_{k}/2}\\
&\leqslant
\limsup_{k\to\infty}\frac{\log(n_k^{-4/t_*^-})}{-\gamma\, n_k}
=0\ .
\end{split}
\end{equation*}
Hence $\lim_{k\to\infty}\frac{\log(f_\omega^{n_k}(x+\epsilon_k/2)-\varphi^-)}{\log\epsilon_{k}/2}
=0$, and
it follows from (\ref{eq:sigma+upper}) and (\ref{eq:sigma+lower}) 
that $\sigma^+(\omega,x)=0$. In the same way one proves $\sigma^-(\omega,x)=0$.
\item[(B$^-$)] {$\varphi^-<x<\varphi_c(\omega)$:}\\
In this case we can assume that $\varphi^-<\varphi_c$ $\nu$-a.e. , so that $\lambda_\nu(\varphi^-)<0$ by Proposition~\ref{prop:two-basins}. As $x<\varphi_c(\omega)$,
there is $k_0=k_0(\omega,x)$ such that $\lim_{n\to\infty}f_\omega^n(x+\epsilon_{n_{k_0}})-\varphi^-=0$. By monotonicity of the branches $f_\omega$ it follows that $\lim_{k\to\infty}f_\omega^{n_k}(x+p\epsilon_{n_{k}})-\varphi^-=0$ for each $p\in[0,1]$. So we can apply
Lemma~\ref{lemma:dist-folklore} and conclude that
\begin{equation*}
\lim_{k\to\infty}\frac{1}{n_k}\log(f_\omega^{n_k}(x+p\epsilon_{n_k})-\varphi^-)
=
\lim_{k\to\infty}\frac{1}{n_k}\log(f_\omega^{n_k})'(\varphi^-)
=
\lambda_\nu(\varphi^-)<0
\end{equation*}
for $\nu$-a.e. $\omega$ and $x\in(\varphi^-,\varphi_c(\omega))$. Referring to (\ref{eq:sigma+upper}) and (\ref{eq:sigma+lower}) this implies for such $(\omega,x)$
\begin{equation*}
\sigma^+(\omega,x)
=
t_*^-\cdot
\lim_{k\to\infty}\frac{\log(f_\omega^{n_k}(x+\epsilon_k)-\varphi^-)}{-\gamma\,n_k}
=t_*^-\cdot \frac{-\lambda_\nu(\varphi^-)}{\int\log|\S'|\,d\nu}>0\ ,
\end{equation*}
and it follows that $\sigma^-(\omega,x)=0$.
 
\item[(B$^+$)] {$\varphi_c(\omega)<x<\varphi^+$:}\\
As in (B$^-$) one proves 
$\sigma^-(\omega,x)
=
t_*^+\cdot \frac{-\lambda_\nu(\varphi^+)}{\int\log|\S'|\,d\nu}>0$ and
$\sigma^+(\omega,x)=0$ for $\nu$-a.e. $\omega$ and $x\in(\varphi_c(\omega),\varphi^+)$.
\item[(C$^-$)] $x=\varphi^-$ and $\lambda_\nu(\varphi^-)<\int\log|\S'|\,d\nu$: 
\\
This means that $\lambda_\nu(\varphi^-)-\lim_{k\to\infty}\frac{\log\epsilon_k}{n_k}<0$, so that one can apply Lemma~\ref{lemma:LD-prep1} with $\vartheta=p\epsilon_k$ for large $k$ to get
\begin{equation*}
\lim_{k\to\infty}\frac{1}{n_k}\log(f_\omega^{n_k}(x+p\epsilon_k)-\varphi^-)
=
\lambda_\nu(\varphi^-)-\int\log|\S'|\,d\nu<0
\end{equation*}
for $p=\frac{1}{2}$ and $p=1$,
so that
\begin{equation*}
\sigma^+(\omega,x)
=
t_*^-\cdot
\lim_{k\to\infty}\frac{\log(f_\omega^{n_k}(x+p\epsilon_k)-\varphi^-)}{-\gamma\,-n_k}
=t_*^-\cdot \frac{\int\log|\S'|\,d\nu\lambda_\nu(\varphi^-)}{\int\log|\S'|\,d\nu}>0\ ,
\end{equation*}
and hence $\sigma^-(\omega,x)=0$ for $\nu$-a.e. $\omega$ and $x=\varphi^-(\omega)$.
\item[(C$^+$)] $x=\varphi^+$ and $\lambda_\nu(\varphi^+)<\int\log|\S'|\,d\nu$.:
\\
As in (C$^-$) one proves 
$\sigma^-(\omega,x)
=t^+\cdot \frac{\int\log|\S'|\,d\nu-\lambda_\nu(\varphi^+)}{\int\log|\S'|\,d\nu}>0$ and $\sigma^+(\omega,x)=0$ for $\nu$-a.e. $\omega$ and $x=\varphi^+(\omega)$.
\end{enumerate}

\subsection{Proof of Theorem~\ref{theo:uncertainty}}\label{subsec:Proof_Theo-uncertainty}\label{subsec:proof-of-uncertainty}
As $S$ is an irreducible and aperiodic Markov map, there are $s,q\in\N$ such that $S^s(K)=\I$ (possibly except for the endpoints) for all Markov intervals $K\in\cK$, and such that each point in $\I$ has at most $q$ preimages under $S^s$.

For $u>0$ let 
\begin{equation}\label{eq:Phi-pm-def}
\Phi^-(u):=m\{\omega\in\I: \varphi_c(\omega)<\varphi^-+u\}\text{ and }
\Phi^+(u):=m\{\omega\in\Omega:\varphi_c(\omega)>\varphi^+-u\}
\end{equation}

Denote by $\tilde n_\epsilon(\omega)$ 
the smallest integer such that $|(S^n)'(\omega)|>e^D\epsilon^{-1}$.
We denote the maximal monotonicity and continuity interval of $S^{\tilde n_\epsilon(\omega)}$ around $\omega$ by $Z_\epsilon(\omega)$. Observe that the distortion estimate (\ref{eq:distortion1}) guarantees that the length of each
$Z_\epsilon(\omega)$ is between $e^{-2D}\epsilon$ and $\epsilon$. In particular, $Z_\epsilon(\omega)\subseteq B_\epsilon(\omega)$ where $B_\epsilon(\omega)$ denotes the $\epsilon$-neighbourhood of $\omega$ in $\I$. 
From each collection $(Z_\epsilon(\omega):\omega\in\I)$ one can extract a Moran cover $\cU_\epsilon$ of $\I$, that is a disjoint (modulo endpoints) collection of sets $Z_\epsilon(\omega)$ covering all of $\I$ (modulo endpoints), see e.g. \cite[Section 13]{pesin-book}. By construction, each $Z\in\cU_\epsilon$ is a maximal monotonicity interval of some $S^{n_Z}$.

With these conventions we have
\begin{equation*}
\begin{split}
&\Pi_{\epsilon,x}\\
&=
(2\epsilon)^{-1}\cdot m^2\left\{(\omega,\omega')\in\I^2: \varphi_c(\omega)>x, \varphi_c(\omega')<x, |\omega'-\omega|<\epsilon\right\}\\
&\geqslant
(2\epsilon)^{-1}\cdot \sum_{Z\in\cU_\epsilon}
m^2\left\{(\omega,\omega')\in Z^2: \varphi_c(\omega)>x, \varphi_c(\omega')<x\right\}\\
&=
(2\epsilon)^{-1}\cdot \sum_{Z\in\cU_\epsilon}
m\left\{\omega\in Z: \varphi_c(\omega)>x\right\}
\cdot m\left\{\omega\in Z: \varphi_c(\omega)<x\right\}\\
\end{split}
\end{equation*}
Because of Lemma~\ref{lemma:dist-prep}, there is a constant $\kappa>0$ such that
for each $Z=Z_\epsilon(\tilde{\omega})\in\cU_\epsilon$ holds
\begin{equation}
\begin{split}
&m\left\{\omega\in Z: \varphi_c(\omega)>x\right\}\\
=&
m\left\{\omega\in Z: \varphi_c(S^{n_Z+s}(\omega))>f_\omega^{n_Z+s}(x)\right\}\\
\geqslant&
m\left\{\omega\in Z: \varphi^+-\varphi_c(S^{n_Z+s}(\omega))<\kappa\cdot(\varphi^+- f_{\tilde\omega}^{n_Z+s}(x))\right\}\label{eq:kappa-first}\\
\geqslant&
q^{-1}e^{-D}\cdot m(Z)\cdot 
m\left\{\omega\in\I: \varphi^+-\varphi_c(\omega)<\kappa\cdot(\varphi^+- f_{\tilde\omega}^{n_Z+s}(x))\right\}\\
=&
q^{-1}e^{-D}\cdot m(Z)\cdot\Phi^+\left(\kappa\cdot(\varphi^+- f_{\tilde\omega}^{n_Z+s}(x))\right)
\end{split}
\end{equation}
and similarly
\begin{equation*}
m\left\{\omega\in Z: \varphi_c(\omega)<x\right\}
\geqslant
q^{-1}e^{-D}\cdot m(Z)\cdot\Phi^-\left(\kappa\cdot( f_{\tilde\omega}^{n_Z+s}(x)-\varphi^-)\right)
\end{equation*}
Observing that $m(Z)=2\epsilon$ for all $Z\in\cU_\epsilon$, we conclude
\begin{equation}\label{eq:first-estimate-for Pi}
\begin{split}
\Pi_{\epsilon,x}
&\geqslant
 q^{-2}e^{-2D}
 \sum_{Z=Z_\epsilon(\tilde\omega)\in\cU_\epsilon}m(Z)\cdot
 \Phi^+\left(\kappa\cdot(\varphi^+-f_{\tilde\omega}^{n_Z+s}(x))\right)\cdot\Phi^-\left(\kappa\cdot( f_{\tilde\omega}^{n_Z+s}(x)-\varphi^-)\right)\\
 &\geqslant
q^{-2}e^{-2D}
\int_\I \Phi^+\left(\kappa^2\cdot(\varphi^+-f_\omega^{n_\epsilon(\omega)}(x))\right)\cdot\Phi^-\left(\kappa^2\cdot( f_\omega^{n_\epsilon(\omega)}(x)-\varphi^-)\right)\,dm(\omega)\ ,
\end{split}
\end{equation}
where ${n}_\epsilon(\omega)=n_Z+s$ when $\omega\in Z\in\cU_\epsilon$.

In the next step we compare $(f_\omega^{n_\epsilon(\omega)}(x)-\varphi^-)$ to 
$(f_\omega^{n_\epsilon(\omega)})'(\varphi^-)$:
As all branches $f_\omega$ have negative Schwarzian derivative, we have
\begin{equation*}
\frac{{f_\omega^n(x)}-\varphi^-}{x-\varphi^-}\cdot\frac{|\J|}{|\J|}
\geqslant
(f_\omega^n)'(\varphi^-)\cdot\frac{\varphi^+-{f_\omega^n(x)}}{\varphi^+-x},
\end{equation*}
so that
\begin{equation*}
{f_\omega^n(x)}-\varphi^-
\geqslant
f_\omega^n)'(\varphi^-)\cdot\frac{x-\varphi^-}{\varphi^+-x}\cdot{(\varphi^+-{f_\omega^n(x)})}.
\end{equation*}
As $\varphi^+-\varphi^-=2$, it follows that
\begin{equation*}
f_\omega^n(x)-\varphi^-
\geqslant
\min\left\{1,(f_\omega^n)'(\varphi^-)\cdot\frac{x-\varphi^-}{\varphi^+-x}\right\}.
\end{equation*}
Let $c=|\log\frac{x-\varphi^-}{\varphi^+-x}|$ (recall that $x$ is fixed). Then
\begin{equation*}
\log\left(\kappa^2\cdot({f_\omega^n(x)}-\varphi^-)\right)
\geqslant
\min\left\{0,\log(f_\omega^n)'(\varphi^-)-c\right\}+\log\kappa^2.
\end{equation*}
For $z\in\R$ and $\epsilon>0$ define 
$\zeta_\epsilon(z)=\min\left\{0,z-\frac{c}{|\log\epsilon|}\right\}+\frac{\log\kappa^2}{|\log\epsilon|}$. Then the last inequality can be rewritten as
\begin{equation*}
\frac{1}{|\log\epsilon|}
\log\left(\kappa^2\cdot(f_\omega^n(x)-\varphi^-)\right)
\geqslant
\zeta_\epsilon\left(\frac{1}{|\log\epsilon|}\log(f_\omega^n)'(\varphi^-)\right),
\end{equation*}
so that
\begin{equation}\label{eq:comparison-minus}
\kappa^2\cdot(f_\omega^n(x)-\varphi^-)
\geqslant
\epsilon^{-\zeta_\epsilon\left(|\log\epsilon|^{-1}\log(f_\omega^n)'(\varphi^-)\right)}.
\end{equation}
Similarly,
\begin{equation}\label{eq:comparison-plus}
\kappa^2\cdot(\varphi^+-{f_\omega^n(x)})
\geqslant
\epsilon^{-\zeta_\epsilon\left(|\log\epsilon|^{-1}\log(f_\omega^n)'(\varphi^+)\right)}.
\end{equation}
Therefore, observing (\ref{eq:first-estimate-for Pi}) and the monotonicuty of $\Phi^\pm$,
\begin{equation}\label{eq:intermediate-phi}
\Pi_{\epsilon,x}
\geqslant
q^{-2}e^{-2D}
\int_\I\Phi^-
\left(\epsilon^{-\zeta_\epsilon\left(|\log\epsilon|^{-1}\log(f_\omega^n)'(\varphi^-)\right)}\right)
\cdot
\Phi^+\left(
\epsilon^{-\zeta_\epsilon\left(|\log\epsilon|^{-1}\log(f_\omega^n)'(\varphi^+)\right)}
\right)\,dm(\omega)\ .
\end{equation}

We determine the asymptotics of this integral when $\epsilon\to0$ using  Varadhan's integral lemma. This needs some preparation: Consider
the three-variate process
\begin{equation}\label{eq:Yn-def}
Y_n
=
\begin{pmatrix}
Y_n^-\\Y_n^+\\Y_n^S
\end{pmatrix}
:=
\begin{pmatrix}
n^{-1}\log (f_\omega^n)'(\varphi^-)\\
n^{-1}\log (f_\omega^n)'(\varphi^+)\\
n^{-1}\log |(S^n)'(\omega)|
\end{pmatrix}.
\end{equation}
We will determine the large deviations of this process and related ones.

As $\log f_\omega'(\varphi^\pm)$ and $\log|S'(\omega)|$ are H\"older
continuous on each set $I_i\times\J$, one can use transfer operators to prove the existence, convexity and smoothness
of the asymptotic log-Laplace transform
\begin{equation}\label{eq:psi-def}
\psi:\R^3\to\R,\quad\psi(\lambda):=\lim_{n\to\infty}\frac{1}{n}
\log\int_\I\exp\left(n\langle \lambda,Y_n\rangle\right)\,dm\ .
\end{equation}
Indeed, $\psi$ coincides with the pressure function (\ref{eq:pressure-function}) defined in Theorem~\ref{theo:uncertainty}.\footnote{This is a well established theory, see e.g. \cite[Section 7]{keller2014} for some details and references.}

Here are some particular values of $\psi$ that help to understand its geometry:
\begin{eqnarray}
&
 \psi(0,0,0)=\psi(t_*^-,0,0)=\psi(0,t_*^+,0)=0
\label{eq:values-1}\\
&\frac{\partial\psi}{\partial \lambda^\pm}(0,0,0)=\lambda_{\mac}(\varphi^\pm)<0,\quad
\frac{\partial\psi}{\partial \lambda^-}(t_*^-,0,0)>0,
\quad
\frac{\partial\psi}{\partial \lambda^+}(0,t_*^+,0)>0
\label{eq:values-2}\\
&0<\inf_{(\lambda^-,\lambda^+,\lambda^S)\in\R^3}\frac{\partial\psi}{\partial \lambda^S}(\lambda^-,\lambda^+,\lambda^S)
\leqslant \sup_{(\lambda^-,\lambda^+,\lambda^S)\in\R^3}\frac{\partial\psi}{\partial \lambda^S}(\lambda^-,\lambda^+,\lambda^S)<\infty\label{eq:values-3}
\end{eqnarray}
Negative Schwarzian derivative of the branches implies furthermore:
\begin{equation*}
\sup_{(\lambda^-,\lambda^+,\lambda^S)\in\R^3}\left(\frac{\partial \psi}{\partial \lambda^-}(\lambda^-,\lambda^+,\lambda^S)+\frac{\partial \psi}{\partial \lambda^+}(\lambda^-,\lambda^+,\lambda^S)\right)<0\ .
\end{equation*}
This is a direct consequence of Propossition~\ref{prop:JBM}, because the partial derivatives are precisely the exponents for the equilibrium state associated with $(\lambda^-,\lambda^+,\lambda^S)$.
In particular, $\psi$ is unbounded below and above.

Let $\psi^*(x):=\sup_{\lambda\in\R^3}
\left(\langle\lambda,x\rangle-\psi(\lambda)\right)$ be the Legendre-Fenchel transform of $\psi$. The G\"artner-Ellis theorem guarantees that the large deviations principle with rate $n$ and with good convex rate function $\psi^*$ holds for the sequence $(Y_n)_{n\in\N}$ \cite[Theorem 2.3.6(c)]{Dembo-Zeitouni-book}. Even more holds in the present situation:
\begin{equation}\label{eq:LDP-A-1}
\parbox{14cm}{
For $t\in\R$ denote $Y_n(t):=t\,Y_{[nt]}$.
Fix $m\in\N$ and $0=t_0<t_1<\dots<t_m\leqslant1$. Then the process
$\tilde{Y}_n:=\left(Y_n(t_1),Y_n(t_2)-Y_n(t_1),\dots,Y_n(t_m)-Y_n(t_{m-1})\right)$
satisfies the LDP in $(\R^3)^m$ with good rate function
\begin{equation*}
\psi_m^*:(\R^3)^m\to[-\infty,\infty],\quad
\psi_m^*(z)=\sum_{i=1}^m(t_i-t_{i-1})\psi^*\left(\frac{z_i}{t_i-t_{i-1}}\right).
\end{equation*}
}
\end{equation}
This is exactly condition (A-1) from \cite{Dembo1995}. Observe that $\psi_m^*$ is the Legendre-Fenchel transform of $\psi_m(\lambda):=\sum_{i=1}^m(t_i-t_{i-1})\psi(\lambda_i)$. We show in Lemma~\ref{lemma:Laplace} at the end of this section
 that $\psi_m$ is the asymptotic log-Laplace transform of $(\tilde{Y}_n)_{n\in\N}$ and that it is convex and smooth. Then the G\"artner-Ellis theorem guarantees the LDP as claimed in (\ref{eq:LDP-A-1}).

As $0<\inf_{\omega\in\I}\log|S'(\omega)|\leqslant\sup_{\omega\in\I}\log|S'(\omega)|<\infty$, one can apply, as a next step, the time
change techniques from \cite[Sections 2-4]{Duffy2004}:
Theorem 1 from \cite{Dembo1995} together with Theorem 1 from \cite{Ganesh2002}
allow to apply Theorem 4 (and the subsequent further discussion) from \cite{Duffy2004} and to conclude \footnote{Formally only the case of  bivariate processes $(Y_n)_{n\in\N}$ is treated in \cite{Duffy2004}. A look at that paper, hovever, reveals that the results from \cite{Dembo1995} and \cite{Ganesh2002} referred to in \cite{Duffy2004} apply to our situation as well and that the proof of Theorem 4 in \cite{Duffy2004} carries over without any changes.}
\footnote{A more attentive look at \cite{Duffy2004} reveals that the results from that paper apply immediately only to the process
$(\tilde n_\epsilon/|\log\epsilon|\cdot Y_{n_\epsilon})_{\epsilon>0}$. However, the distortion property (\ref{eq:distortion1}) and the uniform expansion of $S$ immediately imply that the difference $\tilde{n}_\epsilon(\omega)-n_\epsilon(\omega)$ is bounded uniformly in
$\epsilon$ and in $\omega$. In particular, both processes are exponentially equivalent so that one can be replaced by the other without changing the large deviations behaviour. 
A related question is also discussed
after Theorem 2 of \cite{Duffy2004}.}
:
\begin{equation*}
\parbox{14.5cm}{
The process $(n_\epsilon/|\log\epsilon|\cdot Y_{n_\epsilon})_{\epsilon>0}$ satisfies the large deviation principle with rate $|\log \epsilon|$ and good convex rate function
\begin{equation}
J:\R^2\times(0,\infty)\to[-\infty,\infty],\quad J(x^-,x^+,y)=y\cdot \psi^*\left(\frac{x^-}{y},\frac{x^+}{y},\frac{1}{y}\right).
\end{equation}
}
\end{equation*}

Next we apply Varadhan's lemma \cite[Theorem 4.3.1]{Dembo-Zeitouni-book} to the process $(n_\epsilon/|\log\epsilon|\cdot Y_{n_\epsilon})$ and the continuous function 
\begin{equation*}
H:\R^2\times(0,\infty)\to\R,\quad H(x^-,x^+,y):=\min\{t_*^-x^-,t_*^+x^+,0\}
\end{equation*}
in order to evaluate 
\begin{equation}\label{eq:Varadhan}
\phi:=-\lim_{\epsilon\to0}\frac{1}{|\log\epsilon|}\log\int_\I \exp\left(|\log \epsilon|\cdot H(n_\epsilon/|\log\epsilon|\cdot Y_{n_\epsilon(\omega)}(\omega))\right)dm(\omega)\ ,
\end{equation}
namely:
\begin{equation}\label{eq:phi1}
\begin{split}
\phi
&=
-\sup_{(x^-,x^+)\in\R^2}\sup_{y>0}\left(H(x^-,x^+,y)-J(x^-,x^+,y)\right)\\
&=
\inf_{(x^-,x^+)\in\R^2}\inf_{y>0}\left(y\cdot\psi^*\left(\frac{x^-}{y},\frac{x^+}{y},\frac{1}{y}\right)-H(x^-,x^+,y)
\right)\\
&=
\inf_{(x^-,x^+)\in\R^2}\inf_{y>0}\sup_{\lambda\in\R^3}\left(
\langle \lambda,(x^-,x^+,1)\rangle-y\cdot\psi(\lambda)-H(x^-,x^+,y)
\right).
\end{split}
\end{equation}
The following lemma, whose proof is deferred to the end of this section, relates this to the estimate of
(\ref{eq:intermediate-phi}) we are looking for:
\begin{lemma}\label{lemma:Phi-H}
Let $c,\kappa>0$. 
Then for each $C>0$,
\begin{equation*}
\lim_{\epsilon\searrow0}\;\sup_{z^-,z^+\geqslant-C}\;\sup_{y\in\R}
\left|\frac{1}{|\log\epsilon|}
\log\left(\Phi^-(\epsilon^{-\zeta_\epsilon(z^-)})\cdot
\Phi^+(\epsilon^{-\zeta_\epsilon(z^+)})\right)-H(z^-,z^+,y)
\right|
=0\ .
\end{equation*}
\end{lemma}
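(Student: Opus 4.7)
The plan is to exploit the fact that the rate function $H$ in the statement does not actually depend on $y$, so $\sup_{y\in\R}$ is vacuous; and that, since $\log$ turns the product into a sum, the claim decouples factor by factor. Writing $L=|\log\epsilon|$, it suffices to prove
\[
\frac{1}{L}\,\log\Phi^{\pm}(\epsilon^{-\zeta_\epsilon(z)})\longrightarrow t_*^{\pm}\min\{z,0\}
\quad\text{uniformly in }z\in[-C,\infty),
\]
and then to add the two contributions. The only substantial input for this is Theorem~\ref{theo:exponential}, which gives $\log\Phi^{\pm}(u)/\log u\to t_*^{\pm}$ as $u\searrow 0$.

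First I would compute $\epsilon^{-\zeta_\epsilon(z)}$ explicitly from the definition of $\zeta_\epsilon$: namely $\epsilon^{-\zeta_\epsilon(z)}=\kappa^{2}e^{-c}\cdot e^{Lz}$ for $z\leqslant c/L$ and $\epsilon^{-\zeta_\epsilon(z)}=\kappa^{2}$ for $z\geqslant c/L$. This makes the map $F_\epsilon(z):=L^{-1}\log\Phi^{\pm}(\epsilon^{-\zeta_\epsilon(z)})$ non-decreasing in $z$ (via monotonicity of $\Phi^{\pm}$), and splits the analysis into two natural regimes. On $[1,\infty)$, eventually $F_\epsilon(z)$ equals the constant $L^{-1}\log\Phi^{\pm}(\kappa^2)$, which tends to $0$ uniformly: intermingledness gives $\Phi^{\pm}(\kappa^2)>0$, and trivially $\Phi^{\pm}(\kappa^2)\leqslant m(\I)$. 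On the compact window $[-C,1]$, plugging Theorem~\ref{theo:exponential} together with $\log(\epsilon^{-\zeta_\epsilon(z)})=Lz-(c-\log\kappa^2)$ into the definition yields pointwise convergence to $t_*^{\pm}\min\{z,0\}$.

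The remaining step is uniformity on $[-C,1]$, which I would obtain from the classical fact that pointwise convergence of a sequence of non-decreasing functions to a continuous non-decreasing limit is automatically uniform on compact sets: choose a finite partition $-C=z_0<\dots<z_N=1$ on which the limit oscillates by at most $\delta/2$, apply pointwise convergence at the partition points, and squeeze $F_\epsilon(z)$ between $F_\epsilon(z_{i-1})$ and $F_\epsilon(z_i)$. Patching this with the uniform estimate on $[1,\infty)$ completes the proof. The main technical obstacle I expect is the drifting kink of $\zeta_\epsilon$ at $z=c/L$ together with the corner of the limit at $z=0$; monotonicity of $F_\epsilon$ is what absorbs both non-uniformities cleanly. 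A secondary minor point is that Theorem~\ref{theo:exponential} controls $\Phi^{\pm}(u)$ only as $u\searrow 0$, so the "bounded argument" regime has to be handled separately via the crude bound $\Phi^{\pm}\in(0,m(\I)]$ rather than a more refined power-law estimate.
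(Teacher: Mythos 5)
Your argument for the factor-by-factor uniform convergence $|\log\epsilon|^{-1}\log\Phi^\pm(\epsilon^{-\zeta_\epsilon(z)})\to t_*^\pm\min\{z,0\}$ is correct and takes a genuinely different route from the paper's. The paper introduces the modulus $\Gamma(t):=\sup_{u\geqslant t}\bigl|\frac{\log\Phi^-(e^{-u})}{-u+c-\log\kappa}-t_*^-\bigr|$, deduces from Theorem~\ref{theo:exponential} that $\Gamma$ is bounded with $\Gamma(t)\to0$, and then runs an explicit three-regime case analysis ($z>c/|\log\epsilon|$, $|z|\leqslant c/|\log\epsilon|$, $z<-c/|\log\epsilon|$) to bound the discrepancy by $\const\cdot\bigl(|\log\epsilon|^{-1}+|\log\epsilon|^{-1/2}+\Gamma(|\log\epsilon|^{1/2})\bigr)$. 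You instead use the structural observation that $z\mapsto |\log\epsilon|^{-1}\log\Phi^\pm(\epsilon^{-\zeta_\epsilon(z)})$ is non-decreasing (a composition of the non-decreasing maps $\zeta_\epsilon$, $u\mapsto\epsilon^{-u}$ for $\epsilon<1$, and $\Phi^\pm$), so that the classical P\'olya/Dini fact upgrades pointwise to uniform convergence on $[-C,1]$; the tail $z\geqslant 1$ is absorbed by the eventual constancy $\epsilon^{-\zeta_\epsilon(z)}=\kappa^2$. This is shorter and less computational than the paper's argument, while the paper's approach yields an explicit quantitative rate. Both are sound.

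There is, however, one caveat, which you share with the paper. You supplied your own interpretation of $H$, but in the paper it is defined (inside the Varadhan step, a few lines before the Lemma) as $H(x^-,x^+,y):=\min\{t_*^-x^-,\,t_*^+x^+,\,0\}$. Adding your two factor-limits gives $t_*^-\min\{z^-,0\}+t_*^+\min\{z^+,0\}$, which is what the paper denotes $H^-(z^-)+H^+(z^+)$; this is \emph{strictly smaller} than $H(z^-,z^+,y)$ whenever both $z^-<0$ and $z^+<0$ (for instance $t_*^-z^-=t_*^+z^+=-1$ gives $-2$ versus $-1$). The paper's proof closes with the assertion ``$H^-(z^-)+H^+(z^+)=H(z^-,z^+,1)$'', which fails on exactly that region, even though the supremum in the Lemma ranges over all $z^-,z^+\geqslant -C$. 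So what both your argument and the paper's argument actually establish is the Lemma with $H(z^-,z^+,y)$ replaced by $H^-(z^-)+H^+(z^+)$; the statement as printed is not what either proof gives. It is worth making the form of your limit explicit and flagging the discrepancy, since the downstream use in \eqref{eq:LD-limit} and \eqref{eq:phi1} takes $H$ to be $\min\{t_*^-x^-,t_*^+x^+,0\}$.
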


As $\log f_\omega'(x)$ and $\log|S'(\omega)|$ are uniformly bounded in $\omega$ and $x$, the process $|\log \epsilon|\cdot H(n_\epsilon/|\log\epsilon|\cdot Y_{n_\epsilon(\omega)}(\omega))$ is uniformly bounded, so that (\ref{eq:Varadhan}) together with Lemma~\ref{lemma:Phi-H} implies:
\begin{equation}\label{eq:LD-limit}
\begin{split}
\phi
&=
-\lim_{\epsilon\to0}\frac{1}{|\log\epsilon|}\log\int_\I \exp\left(|\log \epsilon|\cdot H(n_\epsilon/|\log\epsilon|\cdot Y_{n_\epsilon(\omega)}(\omega))\right)dm(\omega)\\
&=
-\lim_{\epsilon\to0}\frac{1}{|\log\epsilon|}\log\int_\I 
\Phi^-(\epsilon^{-\zeta_\epsilon(|\log\epsilon|^{-1}
\log(f_\omega^{n_\epsilon(\omega)})'(\varphi^-))})\cdot
\Phi^+(\epsilon^{-\zeta_\epsilon(|\log\epsilon|^{-1}
\log(f_\omega^{n_\epsilon(\omega)})'(\varphi^+))})\,
dm(\omega)
\end{split}
\end{equation}
%

It remains to evaluate the expression (\ref{eq:phi1}) for $\phi$. To that end we introduce some more notation. Define $\ell:\R\to\R^2$,
\begin{equation*}
\ell(u)
=
\frac{1}{2}
\left(
(1-u)t_*^-,
(1+u)t_*^+
\right)
\end{equation*}
so that $\ell(-1)=(t_*^-,0)$ and $\ell(1)=(0,t_*^+)$. Then $\tilde{\psi}:\R\to\R$, $\tilde{\psi}(u):=\psi(\ell(u),0)$, is convex with $\tilde{\psi}(-1)=\psi(t_*^-,0,0)=0$ and
$\tilde{\psi}(1)=\psi(0,t_*^+,0)=0$ by (\ref{eq:values-1}). Therefore (\ref{eq:values-3}) implies that there are uniquely determined $h(u)\in\R$ such that $\psi(\ell(u),h(u))=0$ for all $u\in\R$, $h(\pm1)=0$, and $h(u)>0$ if and only if $u\in(-1,1)$.  Observe also that $h$ is real analytic by the implicit function theorem, because $\psi$ and $\ell$ are.

We claim that $u\mapsto h(u)$ has a unique maximum (that is attained inside the interval  $(-1,1)$, of course). Indeed: 
\begin{equation*}
\langle D\psi(\ell(u),h(u)),(\ell'(u),h'(u))\rangle=0
\end{equation*}
so that
\begin{equation*}
(\ell'(u),h'(u))\cdot D^2\psi(\ell(u),h(u))\cdot
\begin{pmatrix}
\ell'(u)\\h'(u)
\end{pmatrix}
+
\langle D\psi(\ell(u),h(u)),(\ell''(u),h''(u))\rangle=0\ .
\end{equation*}
As  $\ell''(u)=0$, as $\frac{\partial\psi}{\partial\lambda^S}>0$ and as $\psi$ is convex, this implies
\begin{equation*}
h''(u)
=
-\left(\frac{\partial\psi}{\partial\lambda^S}(\ell(u),h(u))\right)^{-1}
\cdot
(\ell'(u),h'(u))\cdot D^2\psi(\ell(u),h(u))\cdot
\begin{pmatrix}
\ell'(u)\\h'(u)
\end{pmatrix}
\leqslant 0
\end{equation*}
As we cannot guarantee strict concavity of $\psi$, this inequality might possibly be non-strict. But assume that there are $u_1<u_2$ such that $h(u)=c$ is constant for $u_1<u<u_2$. 
As $h(u)$ is analytic in $u\in\R$, this implies $h(u)=c$ for all $u\in\R$. But this contradicts the sign change of $h$ at $u=\pm1$ noted above.

Denote by $u_*$ the unique $u$ where $h(u)$ attains its maximum. Obviously $u_*\in(-1,1)$, $h(u_*)>0$ and $h'(u_*)=0$. Let $\lambda_*:=(\ell(u_*),h(u_*))$, $y_*:=\left(\frac{\partial\psi}{\partial\lambda^S}(\lambda_*)\right)^{-1}>0$ and 
$x^-_*:=y_*\,\frac{\partial\psi}{\partial\lambda^-}(\lambda_*)$,
$x^+_*:=y_*\,\frac{\partial\psi}{\partial\lambda^+}(\lambda_*)$.
Then $\psi(\lambda_*)=0$, $y_*D\psi(\lambda_*)=(x^-_*,x^+_*,1)$, and
\begin{equation}\label{eq:balance}
-t_*^-x^-_*+t_*^+x^+_*
=
2y_*\cdot\frac{\partial}{\partial u}\psi(\ell(u),h(u))_{|u=u_*}
-2y_*\cdot\frac{\partial\psi}{\partial\lambda^S}(\lambda_*)\,h'(u_*)
=0\ ,
\end{equation}
because $\psi(\ell(u),h(u))=0$ for all $u$ and $h'(u_*)=0$. In particular, $x^-_*$ and $x^+_*$ have the same sign, and as 
$x_*^-/y_*$ and $x_*^+/y_*$ 
they are the exponents $\lambda_{\nu_*}(\varphi^-)$ and $\lambda_{\nu_*}(\varphi^+)$ for some equilibrium state $\nu_*$, they must both be negative, see Proposition~\ref{prop:two-basins} for details.

Now we are ready to prove that 
\begin{equation}\label{eq:phi}
\phi=h(u_*)\ . 
\end{equation}
In view of (\ref{eq:phi1}),
\begin{equation*}
\begin{split}
\phi
&\geqslant
\inf_{(x^-,x^+)\in\R^2}\inf_{y>0}\left(
\langle \lambda_*,(x^-,x^+,1)\rangle-y\cdot\psi(\lambda_*)-H(x^-,x^+,y)
\right)\\
&=
\inf_{(x^-,x^+)\in\R^2}\inf_{y>0}\left(
\langle \lambda_*,(x^-,x^+,1)\rangle-H(x^-,x^+,y)
\right)\\
&=
\inf_{(x^-,x^+)\in\R^2}\left(\frac{1-u_*}2t_*^-x^-+\frac{1+u_*}2t_*^+x^++h(u_*)
-\min\{t_*^-x^-,t_*^+x^+,0\}\right)\\
&=h(u_*)\ .
\end{split}
\end{equation*}
For the upper estimate recall that $z_*:=t_*^-x^-_*=t_*^+x^+_*<0$ from (\ref{eq:balance}) and the lines thereafter. Hence
\begin{equation*}
\begin{split}
\phi
&\leqslant
\sup_{\lambda\in\R^3}\left(
\langle \lambda,(x^-_*,x^+_*,1)\rangle-y_*\cdot\psi(\lambda)-H(x^-_*,x^+_*,y_*)
\right)\\
&=
\sup_{\lambda\in\R^3}\left(
\langle \lambda-\lambda_*,(x^-_*,x^+_*,1)\rangle-y_*\cdot\left(\psi(\lambda)-\psi(\lambda_*)\right)
+\langle\lambda_*,(x^-_*,x^+_*,1)\rangle-z_*
\right)\\
&=
\sup_{\lambda\in\R^3}\left(
-y_*\cdot\left((\psi(\lambda)-\psi(\lambda_*))-
\langle\lambda-\lambda_*,D\psi(\lambda_*)\rangle\right)+\langle\lambda_*, (x^-_*,x^+_*,1)\rangle-z_*\right)\\
&=
\frac{1-u_*}{2}t_*^-x^-_*+\frac{1+u_*}{2}t_*^-x^+_*+h(u_*)-z_*
=
h(u_*)\ ,
\end{split}
\end{equation*}
where we used the convexity of $\psi$ for the last equality.
This finishes the proof of Theorem~\ref{theo:uncertainty}.
\qed

\quad\\
\begin{proof}[Proof of Lemma~\ref{lemma:Phi-H}]
Let $H^-(z):=\min\{t_*^-z,0\}$ and $\Delta^-_\epsilon(z):=\left|\frac{1}{|\log\epsilon|}
\log\Phi^-(\epsilon^{-\zeta_\epsilon(z)})-H^-(z)
\right|$.
For $t>c$ let $\Gamma(t):=\sup_{u\geqslant t}\left|\frac{\log \Phi^-(e^{-u})}{-u+c-\log\kappa}-t_*^-\right|$.
A straightforward short calculation 
based on Theorem~\ref{theo:exponential} yields that $\sup_{t\geqslant|c|}\Gamma(t)<\infty$ and $\lim_{t\to\infty}\Gamma(t)=0$. 
We prove that 
\begin{equation}\label{eq:Phi-1}
\lim_{\epsilon\searrow0}\sup_{z\geqslant-C}\Delta^-_\epsilon(z)=0
\end{equation}
for each $C>0$.
Indeed:\\
If $z>\frac{c}{|\log\epsilon|}$, then 
$\zeta_\epsilon(z)=\frac{\log\kappa}{|\log\epsilon|}$ and
$H^-(z)=0$, so that $\epsilon^{-\zeta_\epsilon(z)}=\kappa$ and $\Delta_\epsilon^-(z)=\left|\frac{\log\Phi^-(\kappa)}{\log\epsilon}\right|$.\\
If $|z|\leqslant\frac{c}{|\log\epsilon|}$, then 
$\zeta_\epsilon(z)=z-\frac{c}{|\log\epsilon|}+\frac{\log\kappa}{|\log\epsilon|}\leqslant\frac{\log\kappa}{|\log\epsilon|}$
and $|H(z)|\leqslant\frac{ t_*^-c}{|\log\epsilon|}$, so that
$\epsilon^{-\zeta_\epsilon(z)}\geqslant\kappa$
and $\Delta_\epsilon^-(z)\leqslant
\left|\frac{\log\Phi^-(\kappa)}{\log\epsilon}\right|
+\frac{ t_*^-c}{|\log\epsilon|}$.\\
If $-C\leqslant z<-\frac{c}{|\log\epsilon|}$, then $H^-(z)=t_*^-z$ and
\begin{equation*}
\begin{split}
\Delta^-_\epsilon(z)
&=
|z|\cdot\left|\frac{\log\Phi^-(\epsilon^{-\zeta_\epsilon(z)})}{z|\log \epsilon|}-t_*^-\right|
=
|z|\cdot\left|\frac{\log\Phi^-(e^{-\zeta_\epsilon(z)\log\epsilon})}{-\zeta_\epsilon(z)\log\epsilon+c-\log\kappa}-t_*^-\right|
\\
&\leqslant
|z|\cdot \Gamma(\zeta_\epsilon(z)\,\log\epsilon)\\
\end{split}
\end{equation*}
As $\zeta_\epsilon(z)=z-\frac{c}{|\log\epsilon|}+\frac{\log\kappa}{|\log\epsilon|}\leqslant\frac{\log\kappa}{|\log\epsilon|}$, we have $|z|\leqslant |\zeta_\epsilon(z)|+\left|\frac{c-\log\kappa}{\log\epsilon}\right|$. Hence
\begin{equation*}
\Delta_\epsilon^-(z)
\leqslant
\min\left\{|z|,|\zeta_\epsilon(z)|\right\}\cdot \Gamma(\zeta_\epsilon(z)\,\log\epsilon)+\Gamma(0)\cdot\left|\frac{c-\log\kappa}{\log\epsilon}\right|.
\end{equation*}
If $|\zeta_\epsilon(z)|\leqslant|\log\epsilon|^{-1/2}$, the first summand of this estimate is bounded by $\Gamma(0)\cdot|\log\epsilon|^{-1/2}$. Otherwise it is bounded by $C\cdot\Gamma(|\log\epsilon|^{1/2}|)$.\\
In any case we have
\begin{equation*}
\Delta_\epsilon^-(z)
\leqslant
\const\cdot\left(\frac{1}{|\log\epsilon|}+\frac{1}{|\log\epsilon|^{1/2}}+\Gamma(|\log\epsilon|^{1/2})\right)
\end{equation*}
which tends to zero when $\epsilon\to0$. This is (\ref{eq:Phi-1}).

In the same way one proves that
\begin{equation}\label{eq:Phi-2}
\lim_{\epsilon\searrow 0}\sup_{z\geqslant-C}
\left|\frac{1}{|\log\epsilon|}
\log\Phi^+(\epsilon^{-\zeta_\epsilon(z)})-H^+(z)
\right|=0\ ,
\end{equation}
where $H^+(z):=\min\{t^+z,0\}$. 
As $H^-(z^-)+H^+(z^+)=H(z^-,z^+,1)$, the lemma follows from (\ref{eq:Phi-1}) and (\ref{eq:Phi-2}).
\end{proof}

\begin{remark}\label{remark:Ott}
We compare the exponent $\phi$ with the corresponding one from \cite{Ott1994a,Ott1993} (also called $\phi$ in those papers), which is derived there for the case of riddled basins.
That formula is taken as a reference in \cite{Pereira2008}, although the authors of that paper study numerically the model from \cite{Hofbauer2004}, which has intermingled basins.
So we take the chance to derive an approximate formula in the style of  \cite{Ott1994a,Ott1993} for the model studied in \cite{Hofbauer2004,Pereira2008}, although it has the full logistic map as its driving system. But as that map is smoothly conjugated to the full symmetric tent map (with singularities only at the points $0$ and $1$), it is not hard to see that our Theorem~\ref{theo:uncertainty} carries over to this case. The fibre maps are $f_\omega(x)=x+\alpha x(1-x)\cos(3\pi\omega)$, which have negative Schwarzian derivative as $f_\omega'''=0$.

Note first that $\varphi^-=0$ and $\varphi^+=1$. Observing the symmetry properties of the invariant density of the driving map and of the cosine function, it is easy to check that 
not only $\lambda_\mac(\varphi^-)=\lambda_\mac(\varphi^+)<0$, but that indeed the log-Laplace transform $\psi$ is symmetric in the variables $\lambda^-$ and $\lambda^+$. It follows that
$t_*^-=t_*^+$ so that the convex function $\tilde{\psi}$ satisfies $\tilde{\psi}(-u)=\tilde{\psi}(u)$ and hence attains its minimum at $u_*=0$. Hence $\phi=h(u_*)$ is determined by
$\psi(t_*/2,t_*/2,h(u_*))=0$. As $\log|S'|$ is cohomologous to $\log2$, one can show that $\frac{\partial\psi}{\partial\lambda^S}(\lambda^-,\lambda^+,\lambda^S)=\log2$, so that $\phi=-\psi(t_*/2,t_*/2,0)$. In a diffusion approximation with $\lambda_\mac(\varphi^ \pm)$ close to zero this yields
$\phi\approx-\left(t_*\lambda_\mac(\varphi^\pm)+\frac{t_*^2}{4}(\sigma^2+c)\right)$, where ${\sigma^2\, c \choose \;c\ \; \sigma^2}$ denotes the asymptotic correlation matrix of the two components of the diffusion. Using the approximation $t_*\approx\frac{|\lambda_\mac(\varphi^\pm)|}{D}$ from Remark~\ref{remark:comparison1}, this yields
\begin{equation*}
\phi\approx\frac{\lambda_\mac(\varphi^\pm)^2}{D}-\frac{\lambda_\mac(\varphi^\pm)^2}{4D^2}(\sigma^2+c)=\frac{\lambda_\mac(\varphi^\pm)^2}{2D}-c\frac{\lambda_\mac(\varphi^\pm)^2}{4D^2}\ .
\end{equation*}
Neglecting the correlation term, this is precisely twice the value found in \cite{Ott1994a,Ott1993}, and this might explain why not only in \cite[Figure 13]{Ott1994a}, but also in \cite[Figure 6b (small $\kappa$)]{Pereira2008}, the numerically observed values for $\phi$ are roughly twice as large as the values suggested by the formula.
%
%
%
\end{remark}

\begin{lemma}\label{lemma:Laplace}
Let the processes $(Y_n)_{n\in\N}$ and  $(\tilde{Y}_n)_{n\in\N}$ be defined as in (\ref{eq:Yn-def}) and (\ref{eq:LDP-A-1}), respectively, and
denote by $\psi(\lambda)$ the asymptotic log-Laplace transform of $(Y_n)_{n\in\N}$ defined in (\ref{eq:psi-def}).
Then $\psi_m(\lambda):=\sum_{i=1}^m(t_i-t_{i-1})\psi(\lambda_i)$ is the asymptotic log-Laplace transform of $(\tilde{Y}_n)_{n\in\N}$. It is convex and smooth.
\end{lemma}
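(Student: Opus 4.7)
The plan is to reduce the log-Laplace asymptotics of $\tilde Y_n$ to a product of iterates of weighted transfer operators for $S$, and then invoke the standard spectral picture for such operators under Hypotheses~\ref{hypo:1} and~\ref{hypo:3}.

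First I would use the cocycle identities $\log(f_\omega^{j+k})'(\varphi^\pm)=\log(f_\omega^j)'(\varphi^\pm)+\sum_{\ell=j}^{j+k-1}\log f_{S^\ell\omega}'(\varphi^\pm)$ and the analogous one for $|(S^k)'|$, together with $t_i/[nt_i]=1/n+O(n^{-2})$, to rewrite
\begin{equation*}
n\sum_{i=1}^m\langle \lambda_i,\tilde Y_{n,i}\rangle
=\sum_{i=1}^m\sum_{j=[nt_{i-1}]}^{[nt_i]-1}G_{\lambda_i}(S^j\omega)+O(1),
\end{equation*}
where $G_\lambda(\omega):=\lambda^-\log f_\omega'(\varphi^-)+\lambda^+\log f_\omega'(\varphi^+)+\lambda^S\log|S'(\omega)|$ and $k_i:=[nt_i]-[nt_{i-1}]$. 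The bounded error disappears after dividing the log by $n$, so computing the asymptotic log-Laplace transform of $\tilde Y_n$ is the same as computing it for the block Birkhoff sum on the right.

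Next I would introduce the weighted transfer operators $\cL_\lambda f := \cL_0(e^{G_\lambda} f)$, where $\cL_0$ is the Perron--Frobenius operator of $S$ with respect to Lebesgue $m$. A short induction based on the Leibniz-type identity $g\cdot\cL_0^k h=\cL_0^k(h\cdot g\circ S^k)$ gives $\cL_\lambda^k f=\cL_0^k(e^{\sum_{j=0}^{k-1}G_\lambda\circ S^j}\cdot f)$. Combined with the duality $\int F\cdot G\circ S^k\,dm=\int(\cL_0^k F)\cdot G\,dm$, peeling off the $m$ blocks one after the other yields
\begin{equation*}
\int\exp\Bigl(\sum_{i=1}^m\sum_{j=[nt_{i-1}]}^{[nt_i]-1}G_{\lambda_i}\circ S^j\Bigr)\,dm
=\int \cL_{\lambda_m}^{k_m}\cL_{\lambda_{m-1}}^{k_{m-1}}\cdots\cL_{\lambda_1}^{k_1}\mathbf{1}\,dm.
\end{equation*}

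Under Hypotheses~\ref{hypo:1} and~\ref{hypo:3}, each $\cL_\lambda$ is quasi-compact on a suitable Banach space (for instance $BV$ adapted to the Markov partition, or a H\"older space on the symbolic coding) with a simple, isolated, positive leading eigenvalue $e^{\psi(\lambda)}$ whose eigenvector $v_\lambda$ and eigenmeasure $\mu_\lambda$ depend real-analytically on $\lambda$; see \cite{ParryPollicott,keller-book}. Hence $\cL_{\lambda_i}^{k_i}g=e^{k_i\psi(\lambda_i)}\bigl(\mu_{\lambda_i}(g)\,v_{\lambda_i}+r_{k_i,\lambda_i}(g)\bigr)$ with $\|r_{k_i,\lambda_i}\|=O(\theta^{k_i})$, $\theta<1$, uniformly on compact sets in $\lambda_i$. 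Composing these $m$ asymptotics gives $\int \cL_{\lambda_m}^{k_m}\cdots\cL_{\lambda_1}^{k_1}\mathbf{1}\,dm=C(\lambda)\exp\bigl(\sum_i k_i\psi(\lambda_i)\bigr)(1+o(1))$ with $C>0$ bounded on compacta, so taking $n^{-1}\log$ produces $\sum_i (t_i-t_{i-1})\psi(\lambda_i)=\psi_m(\lambda)$. Convexity of $\psi_m$ is then immediate because it is a sum of convex functions in orthogonal blocks of variables, and smoothness (indeed real-analyticity) follows from the analytic dependence of the top eigenvalue of $\cL_\lambda$ on $\lambda$. The only place where care is genuinely needed is the uniform control of the spectral remainder $r_{k_i,\lambda_i}$ as $\lambda$ ranges over compact sets; this is a standard property of the transfer operator of a piecewise expanding Markov map with H\"older potential and is already used implicitly in the proof of Theorem~\ref{theo:exponential}.
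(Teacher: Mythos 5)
Your proposal is correct and follows essentially the same route as the paper: both reduce the asymptotic log-Laplace transform of $\tilde Y_n$ to iterates of weighted Laplace--Perron--Frobenius operators $\cL_{\lambda_i}$ and then invoke the spectral picture (leading eigenvalue $e^{\psi(\lambda_i)}$ with a spectral gap) to extract $\sum_i(t_i-t_{i-1})\psi(\lambda_i)$. The only cosmetic difference is that you write out the full composition $\int\cL_{\lambda_m}^{k_m}\cdots\cL_{\lambda_1}^{k_1}\mathbf 1\,dm$ directly, whereas the paper peels off only the first block via $\cL_{\lambda_1}^{[nt_1]}$ and finishes by induction on $m$.
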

\begin{proof}
We argue by induction on $m$: For $m=1$ we only observe that $\psi_1=\psi$. For $m>1$ let $\lambda':=(\lambda_2,\dots,\lambda_m)\in(\R^3)^{m-1}$ and $t_i':=t_i-t_1$,  
\begin{equation}\label{eq:Y_n-inductive}
\tilde{Y}_n':=\left(Y_n(t_2'),Y_n(t_3')-Y_n(t_2'),\dots,Y_n(t_m')-Y_n(t_{m-1}')\right)\ .
\end{equation}
Then $\tilde{Y}_n=(Y_n(t_1),\tilde{Y}_n'\circ S^{[nt_1]})$ - at least up to small errors of oder $n^{-1}$ that are immaterial for the asymptotics. 
For a proof of this fact we recall that $nY_n$ is a partial sum process of the form $\sum_{k=0}^{n-1}V_k$. Hence, for $i=2,\dots,m$,
\begin{equation*}
\begin{split}
Y_n(t_i)-Y_n(t_{i-1})
&=
t_iY_{[nt_i]}-t_{i-1}Y_{[nt_{i-1}]}
=
\frac{t_i}{[nt_i]}\sum_{k=0}^{[nt_i]-1}V_k
-
\frac{t_{i-1}}{[nt_{i-1}]}\sum_{k=0}^{[nt_{i-1}]-1}V_k\\
&\approx
\frac{1}{n}\sum_{k=[nt_{i-1}]}^{[nt_i]-1}V_k
=
\frac{1}{n}\sum_{k=[nt_{i-1}]-[nt_1]}^{[nt_i]-[nt_1]-1}V_k\circ S^{[nt_1]}\\
&=
\frac{1}{n}\sum_{k=0}^{[nt_i]-[nt_1]-1}V_k\circ S^{[nt_1]}
-
\frac{1}{n}\sum_{k=0}^{[nt_{i-1}]-[nt_1]-1}V_k\circ S^{[nt_1]}\\
&\approx
\frac{t_i'}{[nt_i']}\sum_{k=0}^{[nt_i']-1}V_k\circ S^{[nt_1]}
-
\frac{t_{i-1}'}{[nt_{i-1}']}\sum_{k=0}^{[nt_{i-1}']-1}V_k\circ S^{[nt_1]}\\
\\
&=
\left(t_i'Y_{[nt_i']}-t_{i-1}'Y_{[nt_{i-1}']}\right)\circ S^{[nt_1]}
=
\left(Y_{n}(t_i')-Y_{n}(t_{i-1}'\right)\circ S^{[nt_1]}\ .
\end{split}
\end{equation*}

Now 
\begin{equation*}
\langle \lambda, \tilde{Y}_n\rangle
=
\lambda_1\,Y_n(t_1)+\langle\lambda',\tilde{Y}_n'\circ S^{[nt_1]}\rangle\ ,
\end{equation*}
so that
\begin{equation}\label{eq:last-one}
\begin{split}
\frac{1}{n}\log\int\exp\left(n\langle \lambda, \tilde{Y}_n\rangle\right)dm
&=
\frac{1}{n}\log\int\exp\left(nt_1\langle\lambda_1,Y_{[nt_1]}\rangle\right)\cdot
\exp\left(n\langle\lambda',\tilde{Y}_n'\rangle\right)\circ S^{[nt_1]}\,dm\\
&\approx
\frac{1}{n}\log\int \cL_{\lambda_1}^{[nt_1]}(1)\cdot
\exp\left(n\langle\lambda',\tilde{Y}_n'\rangle\right)dm
\end{split}
\end{equation}
where $\cL_{\lambda_1}$ denotes the Laplace-Perron-Frobenius operator with extra weight
\begin{equation*}
\exp\left(\lambda_{11}\log f_.'(\varphi^-)+\lambda_{12}\log f_.'(\varphi^+)+\lambda_{13}\log|S'|\right).
\end{equation*}
As $\left(e^{-\psi(\lambda_1)}\cL_{\lambda_1}\right)^k(1)$ converges uniformly to a strictly positive function as $k\to\infty$ (see e.g. \cite{Baladi2000}), the limit in (\ref{eq:last-one}) equals
\begin{equation*}
t_1\psi(\lambda_1)+\sum_{i=2}^m(t_i'-t_{i-1}')\psi(\lambda_i)
=
(t_1-t_0)\psi(\lambda_1)+\sum_{i=2}^m(t_i-t_{i-1})\psi(\lambda_i)
=
\psi_m(\lambda)\ ,
\end{equation*}
where we used the inductive hypothesis for the term
$\exp\left(n\langle\lambda',\tilde{Y}_n'\rangle\right)$.
\end{proof}


\end{document}